\documentclass[11pt]{article}

\usepackage{amssymb, amsmath, amsthm, graphicx}
\usepackage[left=1in,top=1in,right=1in]{geometry}

\date{}

\theoremstyle{plain}
      \newtheorem{theorem}{Theorem}[section]
      \newtheorem{lemma}[theorem]{Lemma}

      \newtheorem{corollary}[theorem]{Corollary}

\theoremstyle{definition}

\theoremstyle{remark}

	\newcommand{\RR}{{\mathbb R}}

\def\twr{\mbox{\rm twr}}

\title{Semi-algebraic Ramsey numbers}
\author{ Andrew Suk\thanks{University of Illinois at Chicago, Chicago, IL, USA.  Supported by NSF grant DMS-1500153. Email: {\tt suk@math.uic.edu}.}}

\begin{document}

\maketitle

\begin{abstract}

Given a finite point set $P \subset \RR^d$, a $k$-ary semi-algebraic relation $E$ on $P$ is a set of $k$-tuples of points in $P$ determined by a finite number of polynomial equations and inequalities in $kd$ real variables. The description complexity of such a relation is at most
$t$ if the number of polynomials and their degrees are all bounded by $t$.  The Ramsey number $R^{d,t}_k(s,n)$ is the minimum $N$ such that any $N$-element point set $P$ in $\RR^d$ equipped with a $k$-ary semi-algebraic relation $E$ of complexity at most $t$ contains $s$ members such that every $k$-tuple induced by them is in $E$ or $n$ members such that every $k$-tuple induced by them is not in $E$.

We give a new upper bound for $R^{d,t}_k(s,n)$ for $k\geq 3$ and $s$ fixed.  In particular, we show that for fixed integers $d,t,s$

 $$R^{d,t}_3(s,n)  \leq 2^{n^{o(1)}},$$

\noindent establishing a subexponential upper bound on $R^{d,t}_3(s,n)$.  This improves the previous bound of $2^{n^{C_1}}$ due to Conlon, Fox, Pach, Sudakov, and Suk where $C_1$ depends on $d$ and $t$, and improves upon the trivial bound of $2^{n^{C_2}}$ which can be obtained by applying classical Ramsey numbers where $C_2$ depends on $s$.   As an application, we give new estimates for a recently studied Ramsey-type problem on hyperplane arrangements in $\RR^d$.  We also study multi-color Ramsey numbers for triangles in our semi-algebraic setting, achieving some partial results.

\end{abstract}

\section{Introduction}

\textbf{Classical Ramsey numbers.}  A $k$-uniform hypergraph $H = (P,E)$ consists of a vertex set $P$ and an edge set $E\subset {P\choose k}$, which is a collection of subsets
of $P$ of order $k$.  The Ramsey number $R_k(s,n)$ is the minimum integer $N$ such that every $k$-uniform hypergraph on $N$ vertices contains either $s$ vertices such that every $k$-tuple induced by them is an edge, or contains $n$ vertices such that every $k$-tuple induced by them is not an edge.

Due to its wide range of applications in logic, number theory, analysis, and geometry, estimating Ramsey numbers has become one of the most central problems in combinatorics.  For \emph{diagonal} Ramsey numbers, i.e.~when $s = n$, the best known lower and upper bounds for $R_k(n,n)$ are of the form\footnote{We write $f(n) = O(g(n))$ if $|f(n)| \leq c|g(n)|$ for some fixed constant $c$ and for all $n \geq 1$; $f(n) = \Omega(g(n))$ if $g(n) = O(f(n))$; and $f(n) = \Theta(g(n))$ if both $f(n) = O(g(n))$ and $f(n) = \Omega(g(n))$ hold.  We write $f(n) = o(g(n))$ if for every positive $\epsilon > 0$ there exists a constant $n_0$ such that $|f(n)| \leq \epsilon |g(n)|$ for all $n \geq n_0$.} $R_2(n,n) = 2^{\Theta(n)}$, and for $k \geq 3$,

$$\twr_{k-1}(\Omega(n^2)) \leq R_k(n,n) \leq \twr_k(O(n)),$$

\noindent where the tower function $\twr_k(x)$ is defined by $\twr_1(x) = x$ and $\twr_{i + 1} = 2^{\twr_i(x)}$ (see \cite{es,erdos2,erdos3,rado}).   Erd\H os, Hajnal, and Rado \cite{erdos3} conjectured that $R_k(n,n)=\twr_k(\Theta(n))$, and Erd\H os offered a \$500 reward for a proof.  Despite much attention over the last 50 years, the exponential gap between the lower and upper bounds for $R_k(n,n)$, when $k\geq 3$, remains unchanged.

The \emph{off-diagonal} Ramsey numbers, i.e.~$R_k(s,n)$ with $s$ fixed and $n$ tending to infinity, have also been extensively studied.  Unlike $R_k(n,n)$, the lower and upper bounds for $R_k(s,n)$ are much more comparable.  It is known \cite{AKS,Kim,B,BK} that $R_2(3,n) =\Theta(n^2/\log n)$ and, for fixed $s > 3$
\begin{equation}\label{offgraph}
\Omega\left( n^{\frac{s+1}{2} - \epsilon} \right) \leq R_2(s,n) \leq O\left(n^{s-1}\right),\end{equation}

\noindent  where $\epsilon > 0$ is an arbitrarily small constant.  Combining the upper bound in (\ref{offgraph}) with the results of Erd\H os, Hajnal, and Rado \cite{rado,erdos3} demonstrates that

\begin{equation}\label{rado2} \twr_{k-1}(\Omega(n))\leq  R_k(s,n) \leq \twr_{k-1}(O(n^{2s-4})),
\end{equation}

\noindent for $k \geq 3$ and $s \geq 2^k$.  See Conlon, Fox, and Sudakov \cite{conlon} for a recent improvement.
\medskip

\noindent \textbf{Semi-algebraic setting.}  In this paper, we continue a sequence of recent works on Ramsey numbers for $k$-ary semi-algebraic relations $E$ on $\RR^d$ (see \cite{bukh,matousek,suk,sukotd}).  Before we give its precise definition, let us recall two classic Ramsey-type theorems of Erd\H os and Szekeres.

\begin{theorem}[\cite{es}]\label{mono}
For $N = (s-1)(n-1) + 1$, let $P = (p_1,\ldots,p_N) \subset \mathbb{R}$ be a sequence of $N$ distinct real numbers.  Then $P$ contains either an increasing subsequence of length $s$, or a decreasing subsequence of length $n$.

\end{theorem}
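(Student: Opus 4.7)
The plan is to give the classical pigeonhole proof of Erd\H os--Szekeres, since Theorem~\ref{mono} is exactly that statement. To each term $p_i$ of the sequence I associate a pair of positive integers $(a_i, b_i)$, where $a_i$ is the length of the longest increasing subsequence of $P$ that ends at $p_i$, and $b_i$ is the length of the longest decreasing subsequence of $P$ that ends at $p_i$. The goal is to show that if no increasing subsequence of length $s$ and no decreasing subsequence of length $n$ exists, then the map $i \mapsto (a_i,b_i)$ takes fewer than $(s-1)(n-1)+1 = N$ distinct values, contradicting $|P| = N$.

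First I would verify the ranges: the hypothesis that no increasing subsequence has length $s$ forces $1 \leq a_i \leq s-1$, and similarly $1 \leq b_i \leq n-1$. So the pairs $(a_i, b_i)$ all lie in the $(s-1)\times(n-1)$ grid $\{1,\ldots,s-1\}\times\{1,\ldots,n-1\}$, which has only $(s-1)(n-1)$ elements.

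Next I would show the map $i\mapsto (a_i,b_i)$ is injective. Take $i<j$; since all terms are distinct, either $p_i < p_j$ or $p_i > p_j$. In the first case, any longest increasing subsequence ending at $p_i$ can be extended by $p_j$, so $a_j \geq a_i + 1 > a_i$. In the second case, analogously $b_j > b_i$. Either way $(a_i,b_i)\neq (a_j,b_j)$.

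By injectivity, $N \leq (s-1)(n-1)$, contradicting $N = (s-1)(n-1)+1$. Hence the sequence must contain either an increasing subsequence of length $s$ or a decreasing subsequence of length $n$. There is no real obstacle here; the only small point requiring care is using the distinctness of the terms in the case analysis (so that the trichotomy reduces to $p_i < p_j$ or $p_i > p_j$), which is given in the hypothesis.
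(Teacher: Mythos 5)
Your argument is the standard Seidenberg pigeonhole proof of Erd\H os--Szekeres and it is correct: the injectivity of $i \mapsto (a_i,b_i)$ combined with the grid bound $(s-1)(n-1)$ gives the contradiction cleanly. The paper does not prove Theorem~\ref{mono}; it cites it to \cite{es} and notes in passing that several different proofs are known (pointing to \cite{steele}), so there is no paper proof to compare against --- your proof is one of the well-known ones and is complete as written.
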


\noindent In fact, there are now at least 6 different proofs of Theorem \ref{mono} (see \cite{steele}). The other well-known result from \cite{es} is the following theorem, which is often referred to as the Erd\H os-Szekeres cups-caps theorem.   Let $X$ be a finite point set in the plane in general position.\footnote{No two members share the same $x$-coordinate, and no three members are collinear.}  We say that $X = (p_{i_1},\ldots,p_{i_s})$ forms an \emph{$s$-cup} (\emph{$s$-cap}) if $X$ is in convex position\footnote{Forms the vertex set of a convex $s$-gon.} and its convex hull is bounded above (below) by a single edge.  See Figure~\ref{cupcap}.

\begin{figure}
\begin{center}
\includegraphics[width=280pt]{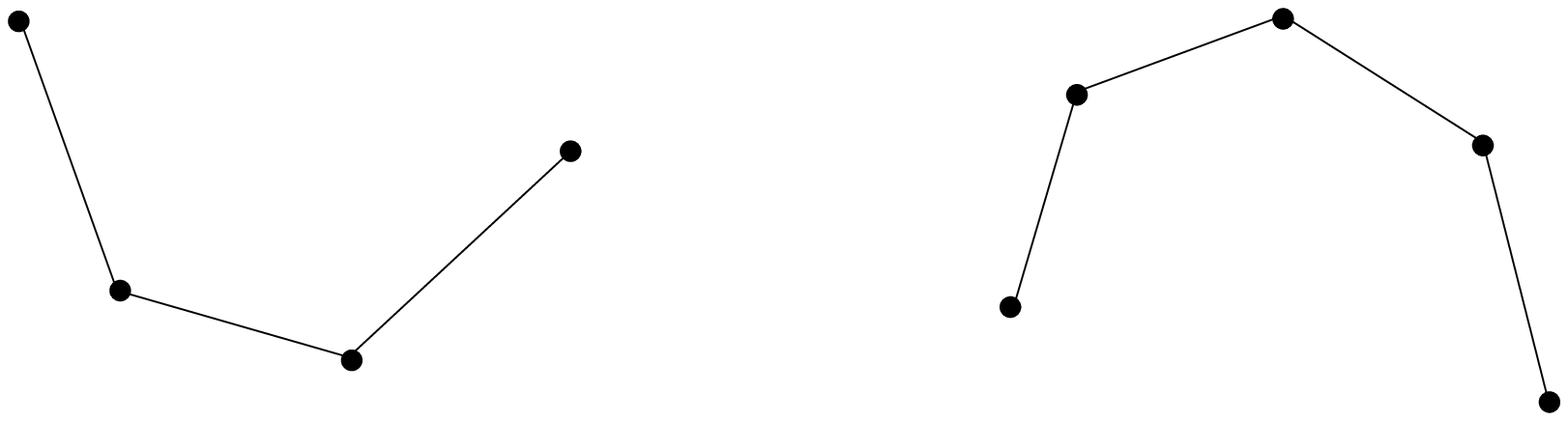}
  \caption{A 4-cup and a 5-cap.}\label{cupcap}
 \end{center}
\end{figure}

\begin{theorem}[\cite{es}]\label{mono2}
For $N =  {n  + s- 4\choose s-2} + 1$, let $P = (p_1,\ldots,p_N)$ be a sequence of $N$ points in the plane in general position.  Then $P$ contains either an $s$-cup or an $n$-cap.
\end{theorem}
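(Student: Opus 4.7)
The plan is to prove the stronger recurrence $f(s, n) \leq f(s-1, n) + f(s, n-1) - 1$, where $f(s,n)$ denotes the minimum $N$ guaranteeing an $s$-cup or $n$-cap in any $N$-point set in general position. Combined with the trivial base cases $f(2, n) = f(s, 2) = 2$ (any two distinct points form both a 2-cup and a 2-cap) and Pascal's identity $\binom{s+n-4}{s-2} = \binom{s+n-5}{s-3} + \binom{s+n-5}{s-2}$, a double induction on $s + n$ will yield the claimed bound $\binom{n+s-4}{s-2} + 1$.

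For the inductive step, I would fix a set $P$ of $N = f(s-1,n) + f(s, n-1) - 1$ points in general position and assume, aiming for a contradiction, that $P$ contains no $s$-cup and no $n$-cap. Let $T \subseteq P$ be the set of points that appear as the rightmost endpoint of some $(s-1)$-cup in $P$, and split according to $|T|$. A counting argument shows that either $|P \setminus T| \geq f(s-1, n)$ or $|T| \geq f(s, n-1)$. In the first case, the inductive hypothesis applied to $P \setminus T$ produces either an $(s-1)$-cup (impossible, since its rightmost point would by definition belong to $T$) or the desired $n$-cap. In the second case, the inductive hypothesis applied to $T$ (which inherits the absence of an $s$-cup) produces an $(n-1)$-cap $(t_1, \ldots, t_{n-1})$ in $T$.

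To finish, I would extend this $(n-1)$-cap into an $n$-cap using a witnessing $(s-1)$-cup $(c_1, \ldots, c_{s-2}, t_1)$ at $t_1 \in T$. Compare the slope $\sigma$ from $c_{s-2}$ to $t_1$ with the slope $\tau$ from $t_1$ to $t_2$: if $\tau > \sigma$, then appending $t_2$ to the cup gives an $s$-cup, contradicting the assumption; otherwise $\tau < \sigma$ (general position excludes equality), and prepending $c_{s-2}$ to the cap $(t_1, \ldots, t_{n-1})$ produces an $n$-cap, since the slopes along $c_{s-2}, t_1, t_2, \ldots, t_{n-1}$ remain strictly decreasing.

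The main obstacle I foresee is pinning down the slope-comparison dichotomy at the junction point $t_1$ and verifying that the partition $P = T \cup (P \setminus T)$ interacts cleanly with the cup/cap bookkeeping, so that each branch forces the existence of the desired monotone configuration. Once this dichotomy is in hand, the rest is a routine induction driven by Pascal's rule.
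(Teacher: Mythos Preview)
The paper does not actually prove Theorem~\ref{mono2}; it merely quotes the result and attributes it to Erd\H os and Szekeres~\cite{es}. Your proposal is correct and is precisely the classical Erd\H os--Szekeres argument: establish the recurrence $f(s,n)\le f(s-1,n)+f(s,n-1)-1$ via the set $T$ of right endpoints of $(s-1)$-cups, then close the inductive step with the slope dichotomy at the junction point, and conclude by Pascal's identity. There is nothing to compare against in the paper itself, but your write-up matches the original proof in~\cite{es}.
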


Theorems \ref{mono} and \ref{mono2} can be generalized using the following semi-algebraic framework.  Let $P = \{p_1,\ldots,p_N\}$ be a sequence of $N$ points in $\RR^d$.  Then we say that $E\subset {P\choose k}$ is a \emph{semi-algebraic relation} on $P$ with \emph{complexity} at most $t$ if there are $t$ polynomials $f_1,\ldots,f_t \in \RR[x_1,\ldots,x_{kd}]$ of degree at most $t$, and a Boolean function $\Phi$ such that, for $1 \leq i_1 < \cdots < i_k \leq N$,

$$(p_{i_1},\ldots,p_{i_k}) \in E \hspace{.5cm}\Leftrightarrow \hspace{.5cm} \Phi(f_1(p_{i_1},\ldots,p_{i_k}) \geq 0,\ldots, f_t(p_{i_1},\ldots,p_{i_k}) \geq 0) = 1.$$

\noindent   We say that the relation $E\subset {P\choose k}$ is \emph{symmetric} if $(p_{i_1},\ldots,p_{i_k}) \in E$ iff for all permutation $\pi$,

$$ \Phi(f_1(p_{\pi(i_1)},\ldots,p_{\pi(i_k)}) \geq 0,\ldots, f_t(p_{\pi(i_1)},\ldots,p_{\pi(i_k)}) \geq 0) = 1.$$

Point sets $P\subset \RR^d$ equipped with a $k$-ary semi-algebraic relation $E\subset{P\choose k}$ are often used to model problems in discrete geometry, where the dimension $d$, uniformity $k$, and complexity $t$ are considered fixed but arbitrarily large constants.  Since we can always make any relation $E$ symmetric by increasing its complexity to $t' = t'(k,d,t)$, we can therefore simplify our presentation by only considering symmetric relations.

Let $R_{k}^{d,t}(s,n)$ be the minimum integer $N$ such that every $N$-element point set $P$ in $\RR^d$ equipped with a $k$-ary (symmetric) semi-algebraic relation $E\subset{P\choose k}$, which has complexity at most $t$, contains $s$ points such that every $k$-tuple induced by them is in $E$, or contains $n$ points such that no $k$-tuple induced by them is in $E$.  Alon, Pach, Pinchasi, Radoi\v{c}i\'c, and Sharir \cite{noga} showed that for $k = 2$, we have

\begin{equation}\label{semi2}R_2^{d,t}(n,n) \leq n^C,\end{equation}

\noindent where $C = C(d,t)$.  Roughly speaking, $C \approx t{d + t\choose t}$.   Conlon, Fox, Pach, Sudakov, and Suk showed that one can adapt the Erd\H os-Rado argument in \cite{rado} and establish the following recursive formula for $R_k^{d,t}(s,n).$

\begin{theorem}[\cite{suk}]\label{semirec}

Set $M = R^{d,t}_{k-1}(s-1,n-1)$.  Then for every $k \geq 3$,

$$R^{d,t}_{k}(s,n)  \leq 2^{C_1M\log M},$$

\noindent where $C_1=C_1(k,d,t)$.
\end{theorem}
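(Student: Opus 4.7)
The plan is to adapt the standard Erd\H os--Rado iterated-neighborhood argument, replacing the binary halving step (which normally costs a factor of $2^{\binom{i}{k-2}}$ per round) with a partition into the cells of an arrangement of semi-algebraic sets, which only costs a factor polynomial in $i$. This is precisely the mechanism that converts the usual $2^{O(M^2)}$-type bound into the sharper $2^{O(M\log M)}$ bound. Set $N=2^{C_1 M\log M}$, where $C_1=C_1(k,d,t)$ will be chosen at the end, and assume $|P|\geq N$.

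Starting from $P=Q_0$, I would build greedily a sequence $v_1,v_2,\ldots,v_M\in P$ together with a nested chain $Q_0\supseteq Q_1\supseteq\cdots\supseteq Q_M$ with $v_{i+1}\in Q_i$, maintaining the invariant that for every $(k-1)$-subset $T=\{v_{j_1},\ldots,v_{j_{k-1}}\}\subseteq\{v_1,\ldots,v_i\}$ and every $u\in Q_i$, membership of $(v_{j_1},\ldots,v_{j_{k-1}},u)$ in $E$ depends only on $T$. At step $i+1$, after picking $v_{i+1}\in Q_i$ arbitrarily, I must refine $Q_i$ to restore the invariant for the $\binom{i}{k-2}$ new $(k-1)$-subsets containing $v_{i+1}$. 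For each such subset $\{v_{j_1},\ldots,v_{j_{k-2}}\}$, the relation $(v_{j_1},\ldots,v_{j_{k-2}},v_{i+1},u)\in E$ cuts out a semi-algebraic set in $\RR^d$ defined by at most $t$ polynomials of degree at most $t$ in the coordinates of $u$. Thus the whole arrangement involves $O(i^{k-2})$ polynomials of bounded degree in $d$ real variables, and the Milnor--Thom/Warren bound (the same tool underlying~(\ref{semi2})) yields at most $C_2\, i^{(k-2)d}$ cells of constant sign pattern. Taking $Q_{i+1}$ to be the intersection of $Q_i\setminus\{v_{i+1}\}$ with the most populous cell gives $|Q_{i+1}|\geq |Q_i|/(C_2\, i^{(k-2)d})$ and restores the invariant.

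Iterating yields $|Q_M|\geq N/(C_2^{M}(M!)^{(k-2)d})\geq 1$ once $C_1$ is chosen large enough in terms of $k,d,t$. Now pick any $u^*\in Q_M$ and define a symmetric $(k-1)$-ary relation $E^*$ on $V:=\{v_1,\ldots,v_M\}\subseteq\RR^d$ by declaring $(v_{i_1},\ldots,v_{i_{k-1}})\in E^*$ iff $(v_{i_1},\ldots,v_{i_{k-1}},u^*)\in E$. Substituting the fixed coordinates of $u^*$ into the defining polynomials of $E$ produces at most $t$ polynomials of degree at most $t$ in $(k-1)d$ variables, so $E^*$ is a symmetric semi-algebraic relation of complexity at most $t$. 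Since $|V|=M=R^{d,t}_{k-1}(s-1,n-1)$, we find either an $(s-1)$-subset $W\subseteq V$ with every $(k-1)$-tuple in $E^*$ or an $(n-1)$-subset with every $(k-1)$-tuple outside $E^*$. In the first case, $W\cup\{u^*\}$ is an $s$-element set whose every $k$-tuple lies in $E$: a $k$-subset containing $u^*$ is in $E$ by the definition of $E^*$, while a $k$-subset $\{v_{i_1},\ldots,v_{i_k}\}\subseteq W$ with $i_1<\cdots<i_k$ is in $E$ by the invariant applied to the $(k-1)$-subset $\{v_{i_1},\ldots,v_{i_{k-1}}\}$ at step $i_{k-1}$, since both $v_{i_k}$ and $u^*$ lie in $Q_{i_{k-1}}$ and the $u^*$-outcome is $1$. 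The second case is handled symmetrically.

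The main obstacle I anticipate is establishing the polynomial cell bound cleanly enough that the accumulated cost across all $M$ rounds is $2^{O(M\log M)}$ rather than $2^{O(M^2)}$, and simultaneously ensuring that the probe relation $E^*$ inherits complexity at most $t$ (and not some inflated value depending on $s$ or $n$), so that the quantity on the right-hand side of the recursion is genuinely $R^{d,t}_{k-1}(s-1,n-1)$. Both are handled by the Milnor--Thom/Warren estimate applied uniformly in $i$, together with the observation that substituting fixed coordinates into the defining polynomials of $E$ never increases their number or their degree.
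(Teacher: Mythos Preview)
The paper does not actually prove Theorem~\ref{semirec}; it is quoted from \cite{suk} with only the comment that one can ``adapt the Erd\H os--Rado argument in \cite{rado}'' to obtain it. Your sketch is exactly that adaptation and is correct: the greedy Erd\H os--Rado construction of $v_1,\ldots,v_M$ with the nested pools $Q_i$, where at step $i{+}1$ the $2^{\binom{i}{k-2}}$-way split of the classical argument is replaced by the Milnor--Thom sign-pattern bound on the $t\binom{i}{k-2}$ polynomials obtained by plugging $v_{j_1},\ldots,v_{j_{k-2}},v_{i+1}$ into $f_1,\ldots,f_t$. This gives $|Q_{i+1}|\geq (|Q_i|-1)/(C_2 i^{(k-2)d})$ with $C_2=C_2(k,d,t)$, and hence a total cost $\prod_{i\leq M} C_2 i^{(k-2)d}=2^{O(M\log M)}$ rather than $2^{O(M^2)}$. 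Your endgame is also right: the probe relation $E^*$ obtained by substituting the coordinates of $u^*\in Q_M$ into $f_1,\ldots,f_t$ is still defined by $t$ polynomials of degree at most $t$ and inherits symmetry from $E$ (permutations of the first $k-1$ slots are a subgroup of all permutations), so the induction genuinely calls $R^{d,t}_{k-1}(s-1,n-1)$; and the invariant, applied with both $v_{i_k}$ and $u^*$ lying in $Q_{i_{k-1}}$, correctly transfers the $E^*$-homogeneity of $W$ to $E$-homogeneity of $W\cup\{u^*\}$. This matches the approach of \cite{suk}.
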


\noindent Together with (\ref{semi2}) we have $R_k^{d,t}(n,n) \leq \twr_{k-1}(n^C)$, giving an exponential improvement over the Ramsey numbers for general $k$-uniform hypergraphs.  Conlon et al.~\cite{suk} also gave a construction of a geometric example that provides a $\twr_{k-1}(\Omega(n))$ lower bound, demonstrating that $R^{d,t}_k(n,n)$ does indeed grow as a $(k-1)$-fold exponential tower in $n$.

However, off-diagonal Ramsey numbers for semi-algebraic relations are much less well understood.  The best known upper bound for $R^{d,t}_k(s,n)$ is essentially the trivial bound $$R_k^{d,t}(s,n) \leq \min\left\{R^{d,t}_k(n,n), R_k(s,n)\right\}.$$  The crucial case is when $k = 3$, since any significant improvement on estimating $R_3^{d,t}(s,n)$ could be used with Theorem~\ref{semirec} to obtain a better bound for $R^{d,t}_k(s,n)$, for $k \geq 4$.  The trivial bound above implies that

\begin{equation}\label{triv}R^{d,t}_3(s,n) \leq   2^{n^C},\end{equation}

\noindent where $C = \min\{ C_1(d,t), C_2(s)\}$.

The main difficulty in improving (\ref{triv}) is that the Erd\H os-Rado upper bound argument \cite{rado} will not be effective.  Roughly speaking, the Erd\H os-Rado argument reduces the problem from 3-uniform hypergraphs to graphs, producing a recursive formula similar to Theorem \ref{semirec}.  This approach has been used repeatedly by many researchers to give upper bounds on Ramsey-type problems arising in triple systems \cite{conlon,suk,sukotd,dhruv2}.  However, it is very unlikely that any variant of the Erd\H os-Rados upper bound argument will establish a subexponential upper bound for $R^{d,t}_3(s,n)$.

With a more novel approach, our main result establishes the following improved upper bound for $R^{d,t}_3(s,n)$, showing that the function $R^{d,t}_3(s,n)$ is indeed subexponential in $n$.

\begin{theorem}\label{main}
For fixed integers $d,t\geq 1$ and $s\geq 4$, we have $R^{d,t}_3(s,n) \leq  2^{n^{o(1)}}.$  More precisely

$$R^{d,t}_3(s,n) \leq  2^{2^{c\sqrt{\log n \log\log n}}},$$

\noindent where $c = c(d,t,s)$.
\end{theorem}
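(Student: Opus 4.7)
The strategy is to build an $n$-element independent set in the hypergraph iteratively, using the Alon--Pach--Pinchasi--Radoi\v{c}i\'c--Sharir polynomial Ramsey bound (\ref{semi2}) as the main engine, and then combining it with a batching device to break the exponential barrier.

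Set-up. Suppose $|P|=N$ and the hypergraph $H$ on $P$ has no $s$-clique; I want $n$ vertices forming an independent set. I will choose vertices $p_1,p_2,\ldots$ inductively together with a decreasing chain of candidate sets $P=Q_0\supseteq Q_1\supseteq\cdots$, maintaining the invariant that $\{p_1,\ldots,p_i\}$ is independent in $H$ and, for every $1\le a<b\le i$ and every $q\in Q_i$, the triple $\{p_a,p_b,q\}$ is a non-edge. If I can keep $Q_i$ nonempty through $i=n-1$, I am done.

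Single-step extension. For a candidate $p_{i+1}\in Q_i$, the link $L_{p_{i+1}}=\{(q,r):\{p_{i+1},q,r\}\in E\}$ is a semi-algebraic graph on $Q_i$ of complexity bounded in $d$ and $t$. Since $H$ is $K_s$-free, $L_{p_{i+1}}$ is $K_{s-1}$-free, so (\ref{semi2}) produces an independent set in $L_{p_{i+1}}$ of size $|Q_i|^{\alpha}$ for some $\alpha=\alpha(d,t,s)>0$; this handles every triple involving the new point $p_{i+1}$ and two new candidates. The pair-link constraints $\{p_a,p_{i+1},q\}$ with $a\le i$ are then enforced by an averaging argument paired with a Tur\'an-type dichotomy: either most choices of $p_{i+1}$ give a small combined pair-link footprint in $Q_i$ (and we shrink $Q_i$ by only a polynomial factor), or the density of triples is high enough that an extremal argument extracts an $s$-clique outright.

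Batching. Iterating the single-step extension one vertex at a time loses a polynomial factor at every step and yields only $N\le 2^{n^{O(1)}}$, reproducing the Conlon--Fox--Pach--Sudakov--Suk bound. To improve, I group extensions into super-stages: at a super-stage I apply a Milnor--Thom polynomial cell decomposition to $Q_i$ using the pair-link polynomials of the previously chosen vertices, and within each cell the pair-link constraints are uniform, so $\ell$ consecutive single-step extensions can be performed at a total cost of essentially one polynomial loss rather than $\ell$. If I take $k$ super-stages of size $\ell\approx n/k$, the overall reduction in $|Q_i|$ is roughly $N^{1/\beta^k}$ for some $\beta=\beta(d,t,s)>1$ times a factor coming from the size of the cells; optimising the trade-off between $k$ and $\ell$ gives $k\approx\sqrt{\log n/\log\log n}$ and $\log\log N=O(\sqrt{\log n\log\log n})$, which is the claimed bound.

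Main obstacle. The technical heart is the batched extension inside a single super-stage, where one has to balance three items: (i) the number of cells produced by Milnor--Thom grows polynomially in the number of pair-link polynomials, which is $\binom{i}{2}$ and therefore large; (ii) one must guarantee a cell of size at least $|Q_i|$ divided by this count, in order for the recursion to be meaningful; (iii) the semi-algebraic complexity of the accumulated polynomials across super-stages must remain bounded in $d,t,s$ only. Managing these three items simultaneously so that each super-stage contributes only a single polynomial loss (rather than $\ell$) is the step that drives the exponent $\sqrt{\log n\log\log n}$ and is the most delicate part of the argument.
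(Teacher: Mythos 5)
Your plan is a sequential (Erd\H os--Rado style) greedy construction: choose $p_1,p_2,\ldots$ one at a time, maintain a candidate set $Q_i$, and try to mitigate the per-step polynomial loss by ``batching'' several extensions inside one Milnor--Thom cell decomposition. This is genuinely \emph{not} the route the paper takes. In fact the paper explicitly flags in the introduction that ``it is very unlikely that any variant of the Erd\H os--Rado upper bound argument will establish a subexponential upper bound for $R^{d,t}_3(s,n)$,'' and it avoids that framework entirely. The paper's mechanism is a \emph{parallel} divide-and-conquer: Lemma \ref{decomp2} produces $r\approx N^{1/(30d)}$ parts $P_1,\ldots,P_r$, each of size $\approx N^{1/(30d)}$, such that every triple of parts is homogeneous and every cross-triple $(p,q,i)$ is ``good.'' A transversal plus the inductive hypothesis isolates $m=\sqrt{\log N}$ parts $Q_1,\ldots,Q_m$ so that no triple with one point in each of three parts is in $E$; the two-parts-plus-one cross-triples are then killed by the Dilworth-plus-cutting machinery of Lemmas \ref{colors} and \ref{apply} (not by the Alon--Pach--Pinchasi--Radoi\v{c}i\'c--Sharir bound (\ref{semi2}), which does not appear in the proof). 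Finally one recurses independently inside each surviving part $T_i$ and \emph{takes the union} $P'=\bigcup_i U_i$, so the independent-set sizes \emph{multiply} across the $m$ parts. It is precisely this multiplicative combination that produces the recurrence $\log |P'| \ge \log|U_i| + \tfrac12\log\log N$ with $\log N\mapsto \log N/(c_5\log\log N)$, which unwinds to the $2^{(\log\log N)^2/(c^s\log\log\log N)}$ bound and hence $\log\log N \lesssim \sqrt{\log n\log\log n}$.

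Beyond being a different route, your proposal has a real gap at exactly the place you call ``the technical heart.'' The batching claim --- that $\ell$ consecutive single-step extensions cost ``essentially one polynomial loss'' --- is never established, and as written it does not hold up. Putting the $\binom{i}{2}$ pair-link polynomials through a cell decomposition costs a factor on the order of $i^{O(d)}$ in $|Q_i|$, and it only uniformizes the constraints between new vertices and the $i$ old ones; you still owe the constraints among the $\ell$ new vertices themselves (triples with two or three points among the new ones), which is the same problem you started with, one level down. Nothing in the sketch turns this into a recurrence that yields $N^{1/\beta^k}$ after $k$ super-stages, and the ``averaging argument paired with a Tur\'an-type dichotomy'' for the pair-link constraints is asserted but not supplied. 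The missing idea that the paper uses to break the exponential barrier is the switch from a linear, vertex-by-vertex accumulation of the output independent set to a partition into $m=\sqrt{\log N}$ mutually non-interacting parts whose recursively-found independent sets can be unioned; that is what gives the multiplicative gain of $\log m \approx \tfrac12\log\log N$ per level of recursion, and it has no analogue in your sequential scheme.
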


\noindent Let us remark that in dimension 1, Conlon, Fox, Pach, Sudakov, and Suk \cite{suk} established a quasi-polynomial bound for $R^{1,t}_3(s,n)$. In particular, $R^{1,t}_3(s,n) \leq 2^{(\log n)^C}$ where $C = C(t,s)$.   Combining Theorems \ref{main} and \ref{semirec} we have the following.

 \begin{corollary}\label{maincor}
For fixed integers $d,t\geq 1$, $k\geq 3$, and $s\geq k + 1$, we have

$$R^{d,t}_k(s,n) \leq \twr_{k-1}(n^{o(1)}).$$

\end{corollary}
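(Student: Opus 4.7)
The plan is to prove Corollary \ref{maincor} by induction on $k \geq 3$, invoking Theorem \ref{main} for the base case and the recursion in Theorem \ref{semirec} at each inductive step. The key observation that makes the induction close is that the family of bounds of the form $\twr_{k-1}(n^{o(1)})$ is preserved under the operation $M \mapsto 2^{C_1 M \log M}$, provided $M$ itself is already of the form $\twr_{k-2}(n^{o(1)})$.

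For the base case $k = 3$ and any fixed $s \geq 4$, Theorem \ref{main} directly gives
\[
R^{d,t}_3(s,n) \leq 2^{n^{o(1)}} = \twr_2(n^{o(1)}),
\]
which is exactly the required bound.

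For the inductive step, I would assume the corollary holds for $k-1$ with any fixed $s' \geq k$. Given $s \geq k + 1$, apply the inductive hypothesis with $s' = s - 1 \geq k$ and $n' = n - 1$ to obtain
\[
M := R^{d,t}_{k-1}(s-1,n-1) \leq \twr_{k-2}(n^{o(1)}).
\]
Theorem \ref{semirec} then yields $R^{d,t}_k(s,n) \leq 2^{C_1 M \log M}$, and it remains to verify $C_1 M \log M \leq \twr_{k-2}(n^{o(1)})$. In the case $k = 4$, where $\twr_{k-2}(x) = 2^{x}$, this reduces to checking $2^{n^{o(1)}} \cdot n^{o(1)} \leq 2^{n^{o(1)}}$, which holds since $n^{o(1)} = 2^{o(\log n)}$ is dwarfed by $2^{n^{o(1)}}$. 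For $k \geq 5$, I would write $M \leq 2^{\twr_{k-3}(n^{o(1)})}$ so that $\log M \leq \twr_{k-3}(n^{o(1)})$ and $M \log M \leq 2^{2\twr_{k-3}(n^{o(1)})}$; the factor of $2$ in the exponent is then absorbed by a marginally larger $n^{o(1)}$, using that $\twr_{k-3}$ grows superlinearly. Taking one more exponential gives
\[
R^{d,t}_k(s,n) \leq 2^{\twr_{k-2}(n^{o(1)})} = \twr_{k-1}(n^{o(1)}),
\]
closing the induction.

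There is no serious obstacle here: the substantive content is all in Theorem \ref{main}, and the rest is bookkeeping with tower functions. The only step requiring any real care is the absorption of the constant $C_1$ and the $\log M$ factor into the $o(1)$ exponent at each level of the tower, which succeeds precisely because $\twr_{k-2}(x)$ grows faster than any polynomial in $x$ for $k \geq 4$.
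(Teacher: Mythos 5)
Your proof is correct and follows exactly the route the paper intends: the paper merely states that the corollary follows by "combining Theorems \ref{main} and \ref{semirec}," and your induction on $k$---using Theorem \ref{main} as the base case and absorbing the constant $C_1$ and the $\log M$ factor from Theorem \ref{semirec} into the $o(1)$ in the tower exponent at each step---is the intended argument spelled out.
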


\noindent The classic cups-caps construction of Erd\H os and Szekeres \cite{es} is an example of a planar point set with ${n + s - 4\choose s-2}$ elements and no $n$-cup and no $s$-cap.  This implies that $R^{d,t}_3(s,n) \geq \Omega(n^{s-2})$ for $d\geq 2$ and $t \geq 1$, and together with the semi-algebraic stepping-up lemma proven in \cite{suk} (see also \cite{mat4}) we have $R^{d,t}_k(s,n) \geq \twr_{k-2}(\Omega(n^{s/2^k}))$ for $s,d \geq 2^{k}$.

In Section \ref{secosh}, we give an application of Theorem \ref{main} to a recently studied problem on hyperplane arrangements in $\RR^d$.

\bigskip

\noindent \textbf{Monochromatic triangles.}  Let $R_2(s;m) = R_2(\underbrace{s,\ldots,s}_m)$ denote the smallest integer $N$ such that any $m$-coloring on the edges of the complete $N$-vertex graph contains a monochromatic clique of size $s$, that is, a set of $s$ vertices such that every pair from this set has the same color.  For the case $s = 3$, the Ramsey number $R_2(3;m)$ has received a lot of attention over the last 100 years due to its application in additive number theory \cite{schur} (more details are given in Section \ref{schursec}).  It is known (see \cite{fred,schur}) that

$$\Omega(3.19^{m}) \leq R_2(3;m) \leq O(m!).$$

Our next result states that we can improve the upper bound on $R_2(3;m)$ in our semi-algebraic setting.   More precisely, let $R^{d,t}_2(3;m)$ be the minimum integer $N$ such that every $N$-element point set $P$ in $\RR^d$ equipped with symmetric semi-algebraic relations $E_1,\ldots,E_m\subset {P\choose 2}$, such that each $E_i$ has complexity at most $t$ and ${P\choose 2} = E_1\cup \cdots \cup E_m$, contains three points such that every pair induced by them belongs to $E_i$ for some fixed $i$.

\begin{theorem}
\label{color}
For fixed $d,t\geq 1$ we have

$$  R^{d,t}_2(3;m) < 2^{O(m\log\log m)}.$$
\end{theorem}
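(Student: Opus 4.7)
The plan is to prove Theorem~\ref{color} by induction on $m$, targeting a recursion of the form $f(m) \leq (\log m)^{c_0}\, f(m-1)$, where $f(m) := R^{d,t}_2(3;m)$ and $c_0 = c_0(d,t)$. Unrolling such a recursion yields $f(m) \leq \prod_{k=2}^{m}(\log k)^{c_0} = 2^{O(m\log\log m)}$, exactly matching the claimed bound. Recall that the classical Schur-Erd\H os pigeonhole on the $m$ color-neighborhoods of a single vertex only delivers the weaker recursion $f(m)\leq m\cdot f(m-1)$, yielding the standard estimate $2^{O(m\log m)}$; the additional logarithmic savings must come from the polynomial pair-Ramsey bound (\ref{semi2}), which is unavailable in the classical setting.

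For the inductive step, I would fix a point set $P\subset\RR^d$ of size $N > (\log m)^{c_0} f(m-1)$ with symmetric semi-algebraic relations $E_1,\ldots,E_m$ of complexity at most $t$ partitioning $\binom{P}{2}$, assume no monochromatic triangle, and produce a subset $Q\subseteq P$ of size $|Q|\geq N/(\log m)^{c_0}$ on which some single color $E_i$ has no edges at all. Applying the induction hypothesis to $Q$ as an instance on the remaining $m-1$ colors then forces the desired monochromatic triangle. To locate $Q$, my plan is to iteratively apply (\ref{semi2}) to a sequence of coarsened $2$-colorings of $\binom{P}{2}$ obtained by dyadically splitting the color palette $[m]$: at each of $\lceil \log_2 m\rceil$ levels, partition the currently active colors into two halves and use (\ref{semi2}) to select a large subset whose edges all lie in one half. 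Each level halves the active palette, so after $\log_2 m$ levels only a single color $E_i$ remains, and the surviving subset is either a clique in $E_i$ (containing a monochromatic triangle once it has at least $3$ vertices) or an independent set in $E_i$ (serving as the desired $Q$).

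The principal obstacle is quantitative control of $|Q|$ through these $\log_2 m$ halvings. A naive application of (\ref{semi2}) at each step reduces the current vertex count $n$ to $n^{1/C}$, so after $\log_2 m$ iterations the surviving set has only $N^{C^{-\log_2 m}} = N^{1/m^{\log_2 C}}$ points — catastrophically smaller than the required $N/(\log m)^{c_0}$. To rescue this, one needs an \emph{off-diagonal} refinement of the polynomial pair-Ramsey bound, of the form $R^{d,t}_2(3,n)\leq n\cdot (\log n)^{O(1)}$, which plausibly holds for semi-algebraic graphs because of bounded VC-dimension and sharp Zarankiewicz-type edge estimates for relations of complexity at most $t$. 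Establishing this near-linear off-diagonal estimate as an auxiliary lemma is the quantitative heart of the argument; with it in hand, each halving loses only a polylogarithmic factor, the cumulative loss over $\log_2 m$ halvings telescopes to $(\log N)^{O(1)}$, and choosing $N$ just above $2^{c_0 m\log\log m}$ balances this loss against the inductive hypothesis. A secondary bookkeeping issue is that coarsening nominally inflates the complexity parameter $t$ (unions of color classes have complexity growing with $m$); this is handled by always invoking (\ref{semi2}) on auxiliary relations whose complexity is bounded in terms of $d$, $t$, and the split structure, rather than on unbounded unions.
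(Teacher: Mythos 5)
Your overall plan --- induction on $m$, aiming for a recursion of the shape $f(m)\lesssim (\log m)^{O(1)}\,f(m-1)$, which unrolls to $2^{O(m\log\log m)}$ --- is exactly the right target, and it is what the paper achieves. But the mechanism you propose for the inductive step is both different from the paper's and, as written, has two genuine gaps.

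First, the auxiliary lemma $R^{d,t}_2(3,n)\leq n(\log n)^{O(1)}$ is stated as plausible but is not proved, and you correctly flag it as ``the quantitative heart.'' Nothing in (\ref{semi2}) or the cited literature gives it, so the argument is incomplete as proposed.

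Second, and more seriously, even if one grants that lemma, the dyadic/peeling bookkeeping does not close. The dyadic halving of the palette needs, at each level, a \emph{large} vertex set all of whose edges lie in one half of the active palette. Off-diagonal Ramsey $R^{d,t'}_2(3,n)$ only yields either a \emph{triple} of vertices all $L$-colored --- useless unless $|L|=1$, since three $L$-edges need not share a single original color --- or a large set of $R$-colored vertices. So in practice the scheme degenerates to peeling one color at a time, requiring $m$ (not $\log_2 m$) rounds. Each round then costs a factor $(\log N_{\mathrm{cur}})^{O(1)}$ where $N_{\mathrm{cur}}$ is the current vertex count. For $N\approx 2^{cm\log\log m}$ we have $\log N_{\mathrm{cur}}\approx m\log\log m$, so the per-round loss is $m^{O(1)}$, not $(\log m)^{O(1)}$, and the cumulative loss over $m$ rounds is $2^{O(m\log m)}$ --- only the trivial Schur--Ramsey bound. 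The essential missing idea is a way to make the per-round loss depend on $m$ rather than on $N$.

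The paper achieves exactly that via the Cutting Lemma, with no appeal to (\ref{semi2}) at all. It applies Lemma \ref{cut2} to the $mtN$ surfaces $\{x:f_{i,j}(p,x)=0\}$ with cutting parameter $r=2tm$, obtaining a cell $\Delta$ with $|P_1|=|\Delta\cap P|\geq N/(c_1(2tm)^{2d})$ and a set $P_2$, with $|P_2|\geq N/4$, of points whose surfaces do not cross $\Delta$. Each $p\in P_2$ is then monochromatically joined to all of $P_1$ in a single color $\chi(p)$. Letting $m_0$ be the number of colors $\chi$ attains, the two cases give the savings: if $m_0>\log m$, then $\binom{P_1}{2}$ uses at most $m-m_0$ colors, so $|P_1|\leq f(m-m_0)$, and since $|P_1|$ is only \emph{polynomially} (in $m$) smaller than $N$ this is a contradiction; if $m_0\leq\log m$, a pigeonhole gives $P_3\subset P_2$ of size $\geq N/(4m_0)\geq N/(4\log m)$ with $P_1\times P_3\subset E_i$, so $\binom{P_3}{2}\cap E_i=\emptyset$ and $|P_3|\leq f(m-1)$. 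That second case is precisely the recursion $f(m)\leq 4(\log m)f(m-1)$ you were aiming for. The key point you were missing is that the $O(\log m)$ per-step loss is extracted from a one-shot space decomposition whose size is controlled by $m$, not by repeatedly invoking a pair-Ramsey theorem whose loss is controlled by the (much larger) number of points.
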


\medskip

We also show that for fixed $d\geq 1$ and $t\geq 5000$, the function $R^{d,t}_2(3;m)$ does indeed grow exponentially in $m$.

\begin{theorem}\label{lowermulti}

For $d\geq 1$ and $t\geq 5000$ we have

$$ R^{d,t}_2(3;m)  \geq c(1681)^{m/7} \geq c(2.889)^m,$$

\noindent where $c$ is an absolute constant.

\end{theorem}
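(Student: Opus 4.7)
The plan is to construct, for every $m$, an explicit semi-algebraic $m$-colouring of $\Omega(1681^{m/7})$ points on the real line with no monochromatic triangle. Since $R^{d,t}_2$ is monotone in $d$ and $t$, it suffices to work with $d=1$ and $t=5000$. The overall strategy is: start from a good base $7$-colouring of $K_{1681}$ coming from sum-free partitions, then iterate an Abbott-style product, implemented through a base-$L$ scalar embedding so that each colour class remains semi-algebraic of complexity independent of the number of iterations.

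For the base case I would invoke the known lower bound $S(7)\geq 1680$ on the Schur number (via Fredricksen--Sweet), which produces a partition $\{1,\ldots,1680\}=S_1\sqcup\cdots\sqcup S_7$ into sum-free sets. Put $P_0=\{0,1,\ldots,1680\}\subset\RR$ and colour $\{i,j\}$ by the unique $c$ with $|i-j|\in S_c$. A monochromatic triangle $i<j<k$ of colour $c$ would produce the Schur triple $(j-i)+(k-j)=(k-i)$ inside $S_c$, contradicting sum-freeness. Each colour class is the union $\bigcup_{s\in S_c}\{(x,y):(y-x)^2=s^2\}$, defined by $|S_c|$ polynomials of degree $2$, hence semi-algebraic of complexity $O(|S_c|)$.

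For the iteration, fix a large constant $L>4\cdot 1680$ and embed $P_0^k$ into $\RR$ via $\iota(v_1,\ldots,v_k)=\sum_{i=1}^{k} v_i L^{k-i}$, giving a set $P=\iota(P_0^k)$ of size $1681^k$. For a pair $\{u,v\}$ with smallest differing index $i^*$, assign the colour $(i^*,c)\in[k]\times[7]$ where $c$ is the base colour of $\{u_{i^*},v_{i^*}\}$. A standard Abbott-type argument then shows that any monochromatic triangle would come from three tuples agreeing on their first $i^*-1$ coordinates and forming a mono triangle at coordinate $i^*$ in the base, which is impossible. Because $L$ is large, $|x-y|=|\iota(u)-\iota(v)|$ lies in $[\tfrac{1}{2}L^{k-i^*},\,2\cdot 1680\,L^{k-i^*}]$ whenever $u,v$ first differ at index $i^*$, and these intervals are disjoint across $i^*$. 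A direct check then shows that $\{u,v\}\in E_{(i^*,c)}$ iff both $\tfrac{1}{4}L^{2(k-i^*)}\le(x-y)^2\le 4\cdot 1680^2\, L^{2(k-i^*)}$ and $(x-y\pm s L^{k-i^*})^2\le \tfrac{1}{4}L^{2(k-i^*)}$ for some $s\in S_c$. This is a Boolean combination of at most $2+2|S_c|<5000$ polynomial inequalities, each of degree $2$ in $(x,y)$, so each $E_{(i^*,c)}$ has complexity well below $5000$, \emph{independent of $k$}. Consequently $R^{1,5000}_2(3;7k)>1681^k$, and monotonicity in $m$ yields $R^{d,t}_2(3;m)\ge c\,(1681)^{m/7}$ for all $m$.

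The main obstacle is precisely the last complexity bound. A naive realization of the $k$-fold Abbott product in $\RR^k$, using a decision tree on coordinatewise comparisons, would force per-colour complexity to scale linearly with $k$, violating the fixed budget $t\le 5000$. The scalar base-$L$ embedding $\iota$ is the essential trick: all $k$-dependence is absorbed into the numerical constants $L^{k-i^*}$, while the polynomials in $x,y$ themselves remain of bounded degree and bounded count, so the same fixed $t$ accommodates arbitrarily many colours.
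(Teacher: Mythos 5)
Your proposal is correct and takes essentially the same route as the paper: both start from the Fredricksen--Sweet partition showing $S(7)\ge 1680$ and perform an Abbott-type blow-up realized as a scalar embedding into $\RR$ with geometrically separated scales, so that each colour class is a Boolean combination of at most $2+2|S_c|<5000$ degree-$2$ polynomial inequalities, independent of the number of iterations. The only cosmetic difference is that the paper builds the construction recursively (translating $1681$ copies of $P_{\ell-1}$ by multiples of a rapidly growing constant $C(\ell)$ and adding seven new relations per level), whereas you give a one-shot base-$L$ embedding $\iota(v)=\sum_i v_i L^{k-i}$, which is the same idea phrased non-recursively.
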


\medskip

\noindent \textbf{Organization.}  In the next two sections, we recall several old theorems on the arrangement of surfaces in $\RR^d$ and establish a result on point sets equipped with multiple binary relations.  In Section \ref{proofsection}, we combine the results from Sections \ref{cutsection} and \ref{multisection} to prove our main result, Theorem \ref{main}.  We discuss a short proof of our application in Section \ref{secosh}, and our results on monochromatic triangles in Section~\ref{triangles}. We conclude with some remarks.

We systemically omit floor and ceiling signs whenever they are not crucial for the sake of clarity of our presentation.  All logarithms are assumed to be base 2.

\section{Arrangement of surfaces in $\RR^d$}\label{cutsection}

In this section, we recall several old results on the arrangement of surfaces in $\RR^d$.  Let $f_1,\ldots, f_m$ be $d$-variate real polynomials of degree at most $t$, with zero sets $Z_1,\ldots, Z_m$, that is, $Z_i = \{x\in \RR^d: f_i(x) = 0\}$.  Set $\Sigma = \{Z_1,\ldots,Z_m\}$.  We will assume that $d$ and $t$ are fixed, and $m$ is some number tending to infinity.  A \emph{cell} in the arrangement $\mathcal{A}(\Sigma) = \bigcup_i Z_i$ is a relatively open connected set defined as follows.  Let $\approx$ be an equivalence relation on $\mathbb{R}^d$, where $x \approx y$ if $\{i: x \in Z_i\} = \{i:y \in Z_i\}$.  Then the cells of the arrangement $\mathcal{A}(\Sigma)$ are the connected components of the equivalence classes.  A vector $\sigma \in \{-1, 0, +1\}^m$ is a {\it sign pattern} of $f_1,\ldots, f_m$ if there exists an $x \in \mathbb{R}^d$ such that the sign of $f_j(x)$ is $\sigma_j$ for all $j = 1,\ldots, m$. The Milnor-Thom theorem (see \cite{basu, milnor, thom}) bounds the number of cells in the arrangement of the zero sets $Z_1,\ldots, Z_m$ and, consequently, the number of possible sign patterns.

\begin{theorem}[Milnor-Thom]\label{milnor}
Let $f_1,\ldots,f_m$ be $d$-variate real polynomials of degree at most $t$.  The number of cells in the arrangement of their zero sets $Z_1,\ldots,Z_m\subset \mathbb{R}^d$ and, consequently, the number of sign patterns of $f_1,\ldots,f_m$ is at most

$$\left(\frac{50mt}{d}\right)^d,$$

\noindent for $m \geq d \geq 1$.
\end{theorem}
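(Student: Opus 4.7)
The plan is to follow the classical Morse-theoretic route originally due to Oleinik-Petrovsky, Milnor, and Thom. The starting point is a bound on the number of connected components of a real algebraic variety defined by a polynomial of bounded degree.

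First, I would prove the following intermediate statement: if $V = \{x \in \RR^d : g(x) = 0\}$ for a polynomial $g$ of degree at most $D$, then $b_0(V) \leq D(2D-1)^{d-1}$, and in fact the same bound holds for the sum of all Betti numbers of $V$. The argument is to perturb $V$ to a smooth compact hypersurface $V_\epsilon$ (for instance by replacing $g$ with $g^2 - \epsilon$ and intersecting with a large ball), then apply Morse theory to a generic linear height function $\ell$ on $V_\epsilon$. The critical points of $\ell$ on $V_\epsilon$ are cut out by $d$ polynomial equations of degree at most $2D-1$, so Bezout's theorem bounds them by roughly $D(2D-1)^{d-1}$. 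The sum of Betti numbers is at most the number of critical points, and a standard limiting argument transfers the bound from $V_\epsilon$ back to $V$.

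Next, to count cells in the arrangement of the zero sets of $f_1,\ldots,f_m$, I would stratify by the subset $I \subset \{1,\ldots,m\}$ of indices $j$ such that $f_j$ vanishes on the given cell. For each such $I$, the cell lies inside the common zero set $\bigcap_{j \in I} Z_j$, and within that ambient variety the relevant cells are open connected components of the complement of the remaining zero sets. By general position, which can be achieved by an infinitesimal perturbation of the $f_j$, we may assume $\bigcap_{j \in I} Z_j$ is empty whenever $|I| > d$. For $|I| \leq d$, apply the connected-component bound above to the restriction of $\prod_{j \notin I}(f_j^2 - \epsilon)$ to $\bigcap_{j \in I} Z_j$, which is defined by polynomials of total degree $O(mt)$ on a variety of dimension $d - |I|$. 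Summing $\sum_{i=0}^d \binom{m}{i}\cdot (O(t))^d$ and using the standard estimate $\sum_{i=0}^d \binom{m}{i} \leq (em/d)^d$ collapses the total to $(c\,mt/d)^d$ for some absolute constant $c$.

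The main obstacle is sharpening the absolute constant to $50$. The outline above yields some constant without difficulty, but pinning it to $50$ requires careful bookkeeping: the optimal perturbation, a sharp Bezout count on real (rather than complex) critical points, and the tightest form of the partial binomial sum. Since the statement is used only as a black box downstream and what ultimately matters for the application is the polynomial-in-$m$ dependence with exponent $d$, I would prefer to cite the sharp form from the references \cite{basu, milnor, thom} already given in the paper rather than chase the precise constant by hand.
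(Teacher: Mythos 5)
The paper does not prove this statement; Theorem~\ref{milnor} is imported as a classical black box with citations to \cite{basu,milnor,thom}, so there is no in-paper argument to compare against. Your job is therefore to assess the sketch on its own merits, and there is a genuine gap in it.

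The problem is in how you pass from the per-stratum count to the final sum. For a fixed index set $I$ with $|I|=i$, you apply the single-polynomial Milnor bound to $\prod_{j\notin I}(f_j^2-\epsilon)$ restricted to $\bigcap_{j\in I}Z_j$. That product has degree $\Theta(mt)$, so Milnor's bound gives on the order of $(mt)^{\,d-i}$ connected components in that stratum, not $t^{\,d-i}$. Yet the summation you write down, $\sum_{i=0}^{d}\binom{m}{i}\cdot(O(t))^{d}$, silently replaces $(O(mt))^{d-i}$ by $(O(t))^{d}$; that substitution is exactly what your argument does not justify. If you sum what your argument actually yields, namely $\sum_{i=0}^{d}\binom{m}{i}(Cmt)^{d-i}$, the $i=0$ term alone is $(Cmt)^{d}$, which exceeds $(50mt/d)^{d}$ by roughly a factor of $(Cd/50)^{d}$ and blows up for large $d$. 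So the sketch does not reproduce the $1/d$ in the base, and this is not a constant-chasing issue as your closing paragraph suggests; it is a $d^{d}$ discrepancy in the shape of the bound.

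The missing ingredient is the refinement due to Warren and to Pollack--Roy: one must never apply Milnor to a single polynomial of degree $\Theta(mt)$. Instead the argument is arranged so that each stratum's contribution is controlled by polynomials of degree $O(t)$ only, with the $m$-dependence pushed entirely into the binomial coefficient $\binom{m}{d}\le(em/d)^{d}$. That is where the $(mt/d)^{d}$ form comes from, and it requires a more delicate deformation/Morse argument than the one you outline. Since the theorem is used only as a cited black box here, deferring to \cite{basu,milnor,thom} is the right call; but the outline as written would not establish the stated bound even with arbitrary constants.
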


While the Milnor-Thom Theorem bounds the number of cells in the arrangement $\mathcal{A}(\Sigma)$, the complexity of these cells may be very large (depending on $m$).  A long standing open problem is whether each cell can be further decomposed
into semi-algebraic sets\footnote{A real semi-algebraic set in $\RR^d$ is the locus of all points that satisfy a given finite Boolean combination of polynomial equations and inequalities in the $d$ coordinates.} with bounded description complexity (which depends only on $d$ and $t$), such that the total number of cells for the whole arrangement is still $O(m^d)$.  This can be done easily in dimension 2 by a result of Chazelle et al.~\cite{chazelle}.  Unfortunately in higher dimensions, the current bounds for this problem are not tight.  In dimension 3, Chazelle et al.~\cite{chazelle} established a near tight bound of $O(m^3\beta(m))$, where $\beta(m)$ is an extremally slowly growing function of $m$ related to the inverse Ackermann function.  For dimensions $d\geq 4$, Koltun \cite{koltun} established a general bound of $O(m^{2d-4 + \epsilon})$ for arbitrarily small constant $\epsilon$, which is nearly tight in dimension 4.  By combining these bounds with the standard theory of random sampling \cite{agarwal,shor,noga}, one can obtain the following result which is often referred to as the Cutting Lemma.  We say that the surface $Z_i = \{x \in \mathbb{R}^d: f_i(x) = 0\}$ \emph{crosses} the cell $\Delta\
\subset \mathbb{R}^d$ if $Z_i\cap \Delta \neq \emptyset$ and $Z_i$ does not fully contain $\Delta$.

\begin{lemma}[Cutting Lemma]
\label{cut2}
For $d,t \geq 1$, let $\Sigma$ be a family of $m$ algebraic surfaces (zero sets) in $\mathbb{R}^d$ of degree at most $t$.  Then for any $r > 0$, there exists a decomposition of $\mathbb{R}^d$ into at most $c_1r^{2d}$ relatively open connected sets (cells), where $c_1 = c_1(d,t) \geq 1$, such that each cell is crossed by at most $m/r$ surfaces from $\Sigma$.

\end{lemma}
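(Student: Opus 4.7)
My plan is to derive the Cutting Lemma from the two ingredients highlighted just before the statement: the higher-dimensional decomposition bounds of Chazelle et al.\ and Koltun, and the ``standard theory of random sampling.'' Fix $r$ and set $s = C_0 r \log r$, where $C_0 = C_0(d,t)$ is a sufficiently large constant to be chosen in the next step. I would draw a random sample $R$ of $s$ surfaces from $\Sigma$ uniformly at random, then decompose $\mathbb{R}^d$ by the arrangement $\mathcal{A}(R)$, further refined by a vertical decomposition in the sense of Chazelle et al.\ (for $d \leq 3$) or Koltun (for $d \geq 4$), into relatively open connected cells whose description complexity depends only on $d$ and $t$. The cited bounds give at most $c_2 s^{2d-4+\epsilon}$ such cells.

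Next, because each cell in the refined decomposition has constant complexity, the collection of cells crossed by any single surface of $\Sigma$ forms a range space of bounded VC-dimension (depending only on $d$ and $t$). Standard $\epsilon$-net / random-sampling arguments (Haussler--Welzl, or the Clarkson--Shor framework referenced in the paper) then show that for $C_0$ large enough, with positive probability every cell of the refined decomposition is crossed by at most $m/r$ surfaces from $\Sigma$. Fixing any such ``good'' sample $R$ produces the required cutting. The final count is at most $c_2(C_0 r \log r)^{2d-4+\epsilon}$, and the $\log$ factors together with the $r^\epsilon$ are comfortably absorbed into the gap between $r^{2d-4}$ and the target $r^{2d}$, yielding the bound $c_1 r^{2d}$.

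The main obstacle -- and the reason the loose exponent $2d$ is tolerable here -- is the subdivision of the arrangement into cells of \emph{constant} description complexity. By Milnor--Thom (Theorem \ref{milnor}) one can partition $\mathbb{R}^d$ into only $O(s^d)$ cells of $\mathcal{A}(R)$, but those cells can be arbitrarily intricate, which blocks the VC-dimension argument needed for the $\epsilon$-net step. Vertical decomposition remedies this by introducing additional algebraic cuts, at the cost of inflating the cell count from $O(s^d)$ to (currently at best) $O(s^{2d-4+\epsilon})$; this is precisely where one loses the optimal bound. Since this count times the unavoidable $(\log r)^{O(1)}$ penalty from random sampling still fits comfortably under $c_1 r^{2d}$, the statement as phrased is within reach.
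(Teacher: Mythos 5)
The paper does not actually prove Lemma~\ref{cut2}: it states it as a known consequence of the Chazelle et al.\ and Koltun decomposition bounds ``combined with the standard theory of random sampling,'' citing \cite{agarwal,shor,noga}. Your proposal supplies exactly that standard derivation (random sample of size $\Theta(r\log r)$, vertical decomposition into constant-complexity cells, $\epsilon$-net/Clarkson--Shor argument to bound crossings, and absorbing the $\log$ and $\epsilon$ losses into the slack between the true decomposition exponent and $2d$), so the approach is the one the paper intends. One small imprecision: the cell count $c_2 s^{2d-4+\epsilon}$ is Koltun's bound and applies only for $d\geq 4$; for $d=2$ the Chazelle et al.\ bound is $O(s^2)$ and for $d=3$ it is $O(s^3\beta(s))$, neither of which is $O(s^{2d-4+\epsilon})$. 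The conclusion is unaffected since all three bounds, evaluated at $s=\Theta(r\log r)$, are still well under $c_1r^{2d}$, but the formula should be stated dimension-by-dimension rather than uniformly.
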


As an application, we prove the following lemma (see \cite{mat2,chan} for a similar result when $\Sigma$ is a collection of hyperplanes).

\begin{lemma}\label{decomp}
For $d,t\geq 1$, let $P$ be an $N$-element point set in $\RR^d$ and let $\Sigma$ be a family of $m$ surfaces of degree at most $t$.  Then for any integer $\ell$ where $\log m < \ell < N/10$, we can find $\ell$ disjoint subsets $P_i$ of $P$ and $\ell$ cells $\Delta_i$, with $\Delta_i \supset P_i$, such that each subset $P_i$ contains at least $ N/(4\ell)$ points from $P$, and every surface in $\Sigma$ crosses at most $c_2\ell^{1 - 1/(2d)}$ cells $\Delta_i$, where $c_2 = c_2(d,t)$.
\end{lemma}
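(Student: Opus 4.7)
The plan is to apply the Cutting Lemma (Lemma \ref{cut2}) with a parameter $r = \Theta(\ell^{1/(2d)})$, producing a decomposition of $\RR^d$ into $O(\ell)$ cells, and then carve up the heavy cells to extract the required $\ell$ disjoint subsets. Concretely, I would set $r = (c\ell)^{1/(2d)}$ for a sufficiently small constant $c = c(d,t)$ so that Lemma \ref{cut2} yields a decomposition into at most $2\ell$ cells $\Delta^{(1)},\ldots,\Delta^{(k)}$, each crossed by at most $m/r$ surfaces from $\Sigma$.

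For the carving step, call a cell \emph{heavy} if it contains at least $N/(4\ell)$ points of $P$, and \emph{light} otherwise. The light cells together contain at most $2\ell \cdot N/(4\ell) = N/2$ points, so the heavy cells contain at least $N/2$ points of $P$. Inside each heavy cell, I partition the point set into chunks of size $\lceil N/(4\ell)\rceil$; the hypothesis $\ell < N/10$ ensures $\lceil N/(4\ell)\rceil \leq N/(2\ell)$, producing in total at least $(N/2)/(N/(2\ell)) = \ell$ chunks. Declare $P_1,\ldots,P_\ell$ to be any $\ell$ of these chunks and $\Delta_i$ the heavy cell containing $P_i$; the $P_i$ are then disjoint subsets of $P$ with $|P_i| \geq N/(4\ell)$ and $P_i \subset \Delta_i$, as required.

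The main obstacle is the per-surface crossing bound $c_2\ell^{1-1/(2d)}$: Lemma \ref{cut2} directly gives only a \emph{per-cell} count (at most $m/r$ surfaces crossing each cell), which by double counting yields only an \emph{average} of $2\ell/r = O(\ell^{1-1/(2d)})$ crossings per surface, not a uniform bound. To upgrade average to worst case, I would exploit the structural construction of the cutting: one builds it by choosing a random sample $R \subset \Sigma$ of size $r$, forming the arrangement $\mathcal{A}(R)$ (which by Theorem \ref{milnor} has $O(r^d)$ cells), and then subdividing each of its cells into constant-complexity sub-cells. For any fixed $Z \in \Sigma$, the trace of $Z$ on $\mathcal{A}(R)$ is a $(d-1)$-dimensional semi-algebraic arrangement on $Z$ cut out by at most $r$ polynomials of bounded degree, which by Theorem \ref{milnor} applied on $Z$ has $O(r^{d-1})$ cells; accounting for the sub-decomposition of each cell of $\mathcal{A}(R)$, $Z$ meets at most $O(r^{2d-1}) = O(\ell^{1-1/(2d)})$ cells of the cutting. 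The hypothesis $\log m < \ell$ enters precisely here, ensuring that a random sample of size $r = \Theta(\ell^{1/(2d)})$ is large enough to serve as a $(1/r)$-net with positive probability, so that the structural estimate above is valid. Taking the constant $c_2 = c_2(d,t)$ to absorb the implied constants completes the proof.
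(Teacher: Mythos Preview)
Your single-application approach has a genuine gap in the carving step. When a heavy cell $\Delta$ contains several chunks $P_{i_1},\ldots,P_{i_s}$, you set $\Delta_{i_1}=\cdots=\Delta_{i_s}=\Delta$; a surface $Z$ that crosses $\Delta$ then crosses all $s$ of the indexed cells $\Delta_i$, and $s$ may be as large as $\ell$ (for instance when all of $P$ falls into a single cell of the cutting). Since the lemma is applied in Lemma~\ref{decomp2} to count bad triples $(p,q,i)$ indexed by $i$, it is the number of indices $i$ with $Z$ crossing $\Delta_i$ that must be bounded, not the number of distinct cells, and your construction does not control this. Separately, your structural argument for the per-surface crossing bound is incomplete: Milnor--Thom on $Z$ bounds the number of cells of the sample arrangement $\mathcal{A}(R)$ that $Z$ meets, but each such cell may be subdivided into many pieces by the vertical decomposition, and $Z$ can cross all of them. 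What you would need is a zone-type theorem for the decomposition, not Theorem~\ref{milnor}, and the role you assign to the hypothesis $\log m<\ell$ (guaranteeing that a size-$r$ sample is a $(1/r)$-net) does not match how sampling-based cuttings are built.

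The paper takes a different route that sidesteps both issues: it applies Lemma~\ref{cut2} \emph{iteratively} $\ell$ times with the same parameter $r=(\ell/c_1)^{1/(2d)}$, at each step selecting one heavy cell $\Delta_i$ from the remaining points and then \emph{doubling} every surface that crosses $\Delta_i$ before the next application. If a fixed $Z\in\Sigma$ crosses $\kappa$ of the chosen cells, the final multiset of surfaces contains $2^{\kappa}$ copies of $Z$, while its total size is at most $m\bigl(1+(\ell/c_1)^{-1/(2d)}\bigr)^{\ell}\le m\,e^{c_1\ell^{1-1/(2d)}}$; comparing and using $\log m<\ell$ yields the bound on $\kappa$. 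Thus the doubling trick converts the per-cell guarantee of Lemma~\ref{cut2} into a per-surface bound without any appeal to zone complexity, and since each $\Delta_i$ is a fresh cell the multiplicity issue never arises.
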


\begin{proof}

We first find $\Delta_1$ and $P_1$ as follows.  Let $\ell > \log m$ and let $c_1$ be as defined in Lemma \ref{cut2}.  Given a family $\Sigma$ of $m$ surfaces in $\RR^d$, we apply Lemma \ref{cut2} with parameter $r = \left(\ell/c_1\right)^{1/2d}$, and decompose $\RR^d$ into at most $\ell$ cells, such that each cell is crossed by at most $\frac{m}{(\ell/c_1)^{1/2d}}$ surfaces from $\Sigma$.  By the pigeonhole principle, there is a cell $\Delta_1$ that contains at least $N/\ell$ points from $P$.  Let $P_1$ be a subset of exactly $\lfloor N/\ell \rfloor$ points in $\Delta_1\cap P$.  Now for each surface from $\Sigma$ that crosses $\Delta_1$, we ``double it" by adding another copy of that surface to our collection.  This gives us a new family of surfaces $\Sigma_1$ such that

$$|\Sigma_1| \leq m + \frac{m}{(\ell/c_1)^{1/2d}} = m\left(1 + \frac{1}{(\ell/c_1)^{1/2d}}\right).$$

After obtaining subsets $P_1,\ldots,P_i$ such that $|P_j|= \lfloor \frac{N}{\ell}(1 - \frac{1}{\ell})^{j-1}\rfloor$ for $1\leq j \leq i$, cells $\Delta_1,\ldots,\Delta_i$, and a family of surfaces $\Sigma_i$ such that

$$|\Sigma_i| \leq   m\left(1 + \frac{1}{(\ell/c_1)^{1/2d}}\right)^i,$$

\noindent we obtain $P_{i + 1}$, $\Delta_{i + 1}$, $\Sigma_{i + 1}$ as follows.  Given $\Sigma_i$, we apply Lemma \ref{cut2} with the same parameter $r = \left(\ell/c_1\right)^{1/2d}$, and decompose $\RR^d$ into at most $\ell$ cells, such that each cell is crossed by at most $\frac{|\Sigma_i|}{(\ell/c_1)^{1/2d}}$ surfaces from $\Sigma_i$.  Let $P' = P\setminus(P_1\cup \cdots \cup P_i)$.  By the pigeonhole principle, there is a cell $\Delta_{i + 1}$ that contains at least

$$\begin{array}{ccl}
    \frac{|P'|}{\ell} & \geq & \left(N - \sum\limits_{j = 1}^i \frac{N}{\ell}(1 - \frac{1}{\ell})^{j - 1}\right)/\ell \\\\
      & = & \frac{N}{\ell}\left( 1- \frac{1}{\ell}\sum\limits_{j = 1}^i(1 - \frac{1}{\ell})^{j - 1}\right) \\\\
      &  =  & \frac{N}{\ell}\left(1 - \frac{1}{\ell}\right)^{i}
  \end{array}$$

\noindent points from $P'$.  Let $P_{i + 1}$ be a subset of exactly $\lfloor \frac{N}{\ell}\left(1 - 1/\ell\right)^{i} \rfloor$ points in $\Delta_{i + 1}\cap P'$.  Finally, for each surface from $\Sigma_i$ that crosses $\Delta_{i + 1}$, we ``double it" by adding another copy of that surface to our collection, giving us a new family of surfaces $\Sigma_{i + 1}$ such that

$$\begin{array}{ccl}
   |\Sigma_{i + 1}| & \leq & |\Sigma_i| + \frac{|\Sigma_i|}{(\ell/c_1)^{1/2d}} \\\\
      &  = &  |\Sigma_i|\left( 1 +\frac{1}{(\ell/c_1)^{1/2d}}\right)  \\\\
      & \leq & m\left(1 + \frac{1}{(\ell/c_1)^{1/2d}}\right)^{i + 1}.
  \end{array}$$

\noindent   Notice that $|P_i| \geq N/(4\ell)$ for $i \leq \ell$.  Once we have obtained subsets $P_1,\ldots,P_{\ell}$ and cell $\Delta_1,\ldots,\Delta_{\ell}$, it is easy to see that each surface in $\Sigma$ crosses at most $O(r^{1 - 1/2d})$ cells $\Delta_i$.  Indeed suppose $Z \in \Sigma$ crosses $\kappa$ cells.  Then by the arguments above, there must be $2^{\kappa}$ copies of $Z$ in $\Sigma_{\ell}$.  Hence we have

$$2^{\kappa} \leq  m\left(1 + \frac{1}{(\ell/c_1)^{1/2d}}\right)^{\ell} \leq me^{c_1\ell^{1  - 1/2d}}.$$

\noindent Since $\ell \geq \log m$, we have

$$\kappa \leq c_2\ell^{1 - 1/2d},$$

\noindent for sufficiently large $c_2 = c_2(d,t)$.\end{proof}

\section{Multiple binary relations}\label{multisection}

Let $P$ be a set of $N$ points in $\RR^d$, and let $E_1,\ldots,E_m\subset {P\choose 2}$ be binary semi-algebraic relations on $P$ such that $E_i$ has complexity at most $t$.  The goal of this section is to find a large subset $P'\subset P$ such that ${P'\choose 2}\cap E_i = \emptyset$ for all $i$, given that the clique number in the graphs $G_i = (P,E_i)$ are small.

First we recall a classic theorem of Dilworth (see also \cite{monot}).  Let $G =(V,E)$ be a graph whose vertices are ordered $V = \{v_1,\ldots,v_N\}$.  We say that $E$ is \emph{transitive} on $V$ if for  $1 \leq i_1 < i_2 < i_3\leq N$, $(v_{i_1},v_{i_2}), (v_{i_2},v_{i_3}) \in E$ implies that $(v_{i_1},v_{i_3}) \in E$.

\begin{theorem}[Dilworth]\label{dilworth}
Let $G = (V,E)$ be an $N$-vertex graph whose vertices are ordered $V = \{v_1,\ldots,v_N\}$, such that $E$ is transitive on $V$.  If $G$ has clique number $\omega$, then $G$ contains an independent set of order $N/\omega$.
\end{theorem}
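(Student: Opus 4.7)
The plan is to use the classical Erdős--Szekeres-style argument, which here takes an especially clean form because transitivity lets us treat $E$ as (the strict comparability relation of) a partial order on $V$. Specifically, declare $v_i \prec v_j$ whenever $i<j$ and $(v_i,v_j) \in E$. Transitivity of $E$ on the given ordering, together with the fact that $i<j<k$ forces the implications to respect the order, makes $\prec$ a strict partial order on $V$. Under this reinterpretation, a chain in $(V,\prec)$ is exactly a clique in $G$, so the longest chain has length at most $\omega$; and an antichain in $(V,\prec)$ is exactly an independent set in $G$, since any two incomparable vertices (in either index order) are non-adjacent.

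Next I would define the height function $f \colon V \to \{1,2,\ldots,\omega\}$ by letting $f(v_i)$ be the length of the longest chain in $(V,\prec)$ ending at $v_i$. The image is contained in $\{1,\ldots,\omega\}$ because any chain witnessing $f(v_i)$ is in particular a clique in $G$. The key observation is that $f$ is strictly increasing along edges: if $(v_i,v_j)\in E$ with $i<j$, then appending $v_j$ to a longest chain ending at $v_i$ yields a chain of length $f(v_i)+1$ ending at $v_j$, so $f(v_j) \geq f(v_i)+1 > f(v_i)$. Consequently any level set $f^{-1}(k)$ is an independent set in $G$, since two vertices in the same level cannot be joined by an edge in either direction.

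Finally, by the pigeonhole principle applied to the partition of $V$ into at most $\omega$ level sets, some $f^{-1}(k)$ has size at least $N/\omega$, and this level set is the desired independent set. The argument is essentially self-contained and I do not foresee any genuine obstacle; the only mild subtlety is the verification that $\prec$ is antisymmetric and transitive, but both are immediate from the hypothesis that $E$ is transitive with respect to the fixed linear ordering of $V$ (antisymmetry is free because $\prec$ respects the index order, and transitivity of $\prec$ is exactly the hypothesis on $E$). So the proof should amount to setting up the partial order, writing down $f$, and invoking pigeonhole.
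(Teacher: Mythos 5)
Your proof is correct. The paper itself states this lemma as a known result (a variant of Dilworth's theorem) and gives no proof, so there is no alternative argument to compare against; what you give is the standard Mirsky-type height-function argument, and every step checks out: the hypothesis that $E$ is transitive on the ordered vertex set makes $\prec$ a strict partial order, chains are exactly cliques and antichains exactly independent sets, the height $f$ takes values in $\{1,\ldots,\omega\}$ and is strictly increasing along edges (using transitivity of $\prec$ to extend a longest chain ending at $v_i$ by $v_j$), so the level sets of $f$ partition $V$ into at most $\omega$ independent sets and pigeonhole yields one of size at least $N/\omega$.
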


\begin{lemma}\label{colors}
For integers $m \geq 2$ and $d,t \geq 1$, let $P$ be a set of $N$ points in $\mathbb{R}^d$ equipped with (symmetric) semi-algebraic relations $E_1,\ldots,E_m\subset{P\choose 2}$, where each $E_i$ has complexity at most $t$.  Then there is a subset $P'\subset P$ of size $N^{1/(c_3\log m)}$, where $c_3 = c_3(d,t)$, and a fixed ordering on $P'$ such that each relation $E_i$ is transitive on $P'$.

\end{lemma}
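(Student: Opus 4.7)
The plan is to combine a Milnor--Thom-based type refinement with the Alon--Pach--Pinchasi--Radoi\v{c}i\'c--Sharir bound \eqref{semi2} in order to iteratively build an ordered chain on which every relation becomes transitive. The core observation is that for any reference point $p \in P$, the $mt$ polynomials $\{f_{i,j}(p,\cdot)\}_{i\leq m,\,j\leq t}$ are of degree at most $t$ in $d$ variables, so by Theorem~\ref{milnor} they induce at most $K = O((mt)^d)$ sign-pattern cells in $\mathbb{R}^d$. The cell containing $q$---its \emph{type} $\tau_p(q)\in[K]$---records the sign of every $f_{i,j}(p,q)$ and therefore fully determines whether $(p,q)\in E_r$ for each $r$.

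My base construction is greedy: I choose reference points $p_1,p_2,\ldots$ one by one while maintaining a nested reservoir $R_1\supseteq R_2\supseteq\cdots$ such that for every $i\leq j$, the type function $\tau_{p_i}$ is constant on $R_j$. At step $j+1$, I pick $p_{j+1}\in R_j$, partition $R_j\setminus\{p_{j+1}\}$ by $\tau_{p_{j+1}}$, and take the largest class as $R_{j+1}$. Transitivity of every $E_r$ on the resulting chain under its natural ordering is then automatic: for $i<j<l$ the points $p_j,p_l\in R_i$ share a common $\tau_{p_i}$, so $(p_i,p_j)\in E_r\iff(p_i,p_l)\in E_r$, whence $(p_i,p_j),(p_j,p_l)\in E_r$ forces $(p_i,p_l)\in E_r$.

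The main obstacle is quantitative: a plain pigeonhole refinement shrinks the reservoir by only a factor of $K$ per step, giving $|P'|=\Theta(\log N/\log m)$, which is exponentially weaker than the claimed $N^{1/(c_3\log m)}$. To amplify the refinement I would invoke \eqref{semi2} at each step, applied to an appropriately chosen auxiliary binary semi-algebraic relation on the reservoir whose complexity stays $O(mt)$, so that each step extracts a polynomial-size homogeneous sub-reservoir (a clique or independent set for that auxiliary relation) rather than merely an $|R_j|/K$-fraction. Iterating $O(\log m)$ such polynomial-factor reductions then aggregates to the desired exponent $1/(c_3\log m)$. The delicate part of the argument---and the step I expect to be the main technical hurdle---will be choosing the composite relations so that a single application of \eqref{semi2} to them is tantamount to a genuine type-refinement step, and balancing batch sizes so that the exponents accumulate correctly with a constant $c_3=c_3(d,t)$ independent of $m$.
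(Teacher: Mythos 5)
Your observation about sign-pattern types and the transitivity verification for a greedy chain are both sound, and you correctly diagnose the central obstacle: a plain pigeonhole refinement yields a chain of length only $\Theta(\log N / \log(mt))$, exponentially short of the target $N^{1/(c_3\log m)}$. The difficulty is that your proposed amplification does not close this gap and, structurally, cannot.

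Quantitatively, the exponent $C$ in \eqref{semi2} is roughly $t\binom{d+t}{t}$, so an auxiliary relation of complexity $\Theta(mt)$ has $C$ depending on $m$, which is exactly the dependence the statement must avoid. But even granting a constant exponent, a greedy chain is inherently limited: each refinement step contributes at most one point to $P'$, so $|P'|$ is bounded by the number of steps you can take before the reservoir empties. With a polynomial-root reduction $|R_{j+1}| \geq |R_j|^{1/C}$, only $O(\log\log N)$ steps are possible; with the $O(\log m)$ steps you propose, you would produce at most $O(\log m)$ points. Neither comes close to $N^{1/(c_3\log m)}$, which for fixed $m$ is $N^{\Theta(1)}$.

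The paper's mechanism is structurally different: it is a divide-and-conquer recursion rather than a greedy chain, and it never invokes \eqref{semi2}. Applying the Cutting Lemma (Lemma~\ref{cut2}) with parameter $r = (mt)^2$ to the $Nmt$ surfaces $Z_{p,i,j} = \{x : f_{i,j}(p,x)=0\}$, one finds a heavy cell $\Delta$; one sets $P_1 = \Delta \cap P$ and lets $P_2$ be the points whose $mt$ surfaces all avoid $\Delta$, and then by Milnor--Thom refines $P_2$ to $P_3$ by fixing a sign pattern with respect to a reference $p_0 \in P_1$. Both $|P_1|$ and $|P_3|$ are $\geq N/(c_4 m^{4d})$, and crucially the relation between $P_1$ and $P_3$ is uniform: for each $i$, either all of $P_1\times P_3$ lies in $E_i$ or none of it does. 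One applies the induction hypothesis separately inside $P_1$ and inside $P_3$ to get transitive ordered subsets $P_4, P_5$, and concatenates; the cross-uniformity makes each $E_i$ transitive on $P_4 \cup P_5$. The recurrence $f(N) \geq 2\,f\bigl(N/(c_4 m^{4d})\bigr)$ solves to $f(N) \geq N^{1/(c_3\log m)}$. It is the doubling of the output at each level of the recursion that produces the polynomial size, not the extraction of a single large homogeneous block. Your greedy framework has no analogue of this doubling, so it cannot reach the stated bound without a fundamental restructuring in the direction of the paper's argument.
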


\begin{proof}

We proceed by induction on $N$.  Let $c_3$ be a sufficiently large number depending only on $d$ and $t$ that will be determined later.  For each relation $E_i \subset {P\choose 2}$, let $f_{i,1},\ldots,f_{i,t}$ be polynomials of degree at most $t$ and let $\Phi_i$ be a boolean function such that

$$(p,q) \in E_i \hspace{.5cm}\Leftrightarrow\hspace{.5cm} \Phi_i(f_{i,1}(p,q) \geq 0,\ldots,f_{i,t}(p,q) \geq 0) = 1.$$

For each $p \in P$, $i \in \{1,\ldots,m\}$, and $j \in \{1,\ldots,t\}$, we define the surface $Z_{p,i,j} =  \{x \in \RR^d: f_{i,j}(p,x) = 0 \}$.  Then let $\Sigma$ be the family of $Nmt$ surfaces in $\RR^d$ defined by

$$\Sigma = \{Z_{p,i,j} : p \in P, 1\leq i \leq m, 1\leq j \leq t\}.$$

  By applying Lemma \ref{cut2} to $\Sigma$ with parameter $r = (mt)^2$, there is a decomposition of $\RR^d$ into at most $c_1(mt)^{4d}$ cells such that each cell has the property that at most $N/(mt)$ surfaces from $\Sigma$ crosses it.  We note that $c_1 = c_1(d,t)$ is defined in Lemma \ref{cut2}.  By the pigeonhole principle, there is a cell $\Delta$ in the decomposition such that $|\Delta\cap P| \geq N/(c_1(mt)^{4d})$.  Set $P_1 = \Delta\cap P$.

  Let $P_2\subset P\setminus P_1$ be such that each point in $P_2$ gives rise to $mt$ surfaces that do not cross $\Delta$.  More precisely,

$$P_2 = \{p \in P\setminus P_1: Z_{p,i,j} \textnormal{ does not cross }\Delta, \forall i,j\}.$$

\noindent Since $m\geq 2$ by assumption, and $c_1 \geq 1$ from Lemma \ref{cut2}, we have $$|P_2| \geq N - \frac{N}{mt} - \frac{N}{c_1(mt)^{4d}} \geq \frac{N}{4}.$$  We fix a point $p_0 \in P_1$.  Then for each $q\in P_2$, let $\sigma(q) \in \{-1,0,+1\}^{mt}$ be the sign pattern of the $(mt)$-tuple $(f_{1,1}(p_0,q),f_{1,2}(p_0,q),\ldots,f_{m,t}(p_0,q))$.  By Theorem \ref{milnor}, there are at most $\left(\frac{50mt^2}{d}\right)^d$ distinct sign vectors $\sigma$.  By the pigeonhole principle, there is a subset $P_3 \subset P_2$ such that $$|P_3| \geq \frac{|P_2|}{(50/d)^dm^dt^{2d}},$$ and for any two points $q,q' \in P_3$, we have $\sigma(q) = \sigma(q')$.  That is, $q$ and $q'$ give rise to vectors with the same sign pattern.  Therefore, for any $p,p' \in P_1$ and $q,q' \in P_3$, we have $(p,q) \in E_i$ if and only if $(p',q') \in E_i$, for all $i \in \{1,\ldots,m\}$.

Let $c_4 = c_4(d,t)$ be sufficiently large such that $|P_1|,|P_3| \geq \frac{N}{c_4m^{4d}}.$  By the induction hypothesis, we can find subsets $P_4 \subset P_1, P_5\subset P_3$, such that

$$|P_4|,|P_5| \geq \left(\frac{N}{c_4m^{4d}}\right)^{\frac{1}{c_3\log m}}\geq \frac{N^{\frac{1}{c_3\log m}}}{2},$$

\noindent where $c_3 = c_3(d,t)$ is sufficiently large, and there is an ordering on $P_4$ (and on $P_5)$ such that each $E_i$ is transitive on $P_4$ (and on $P_5$).  Set $P' = P_4\cup P_5$, which implies $|P'| \geq N^{\frac{1}{c_3\log m}}$.  We will show that $P'$ has the desired properties.  Let $\pi$ and $\pi'$ be the orderings on $P_4$ and $P_5$ respectively, such that $E_i$ is transitive on $P_4$ and on $P_5$, for every $i\in \{1,\ldots,m\}$.  We order the elements in $P' = \{p_1,\ldots,p_{|P'|}\}$ by using $\pi$ and $\pi'$, such that all elements in $P_5$ come after all elements in $P_4$.

In order to show that $E_i$ is transitive on $P'$, it suffices to examine triples going across $P_4$ and $P_5$.  Let $p_{j_1},p_{j_2} \in P_4$ and $p_{j_3} \in P_5$ such that $j_1 < j_2 < j_3$.  By construction of $P_4$ and $P_5$, if $(p_{j_1},p_{j_2}),(p_{j_2},p_{j_3}) \in E_i$, then we have $(p_{j_1},p_{j_3}) \in E_i$.  Likewise, suppose $p_{j_1}\in P_4$ and $p_{j_2},p_{j_3} \in P_5$.  Then again by construction of $P_4$ and $P_5$, if $(p_{j_1},p_{j_2}),(p_{j_2},p_{j_3}) \in E_i$, then we have $(p_{j_1},p_{j_3}) \in E_i$.  Hence $E_i$ is transitive on $P'$, for all $i \in \{1,\ldots,m\}$, and this completes the proof.\end{proof}

 By combining the two previous results, we have the following.

\begin{lemma}\label{apply}
For $m \geq 2$ and $d,t \geq 1$, let $P$ be a set of $N$ points in $\mathbb{R}^d$ equipped with (symmetric) semi-algebraic relations $E_1,\ldots,E_m\subset {P\choose 2}$, where each $E_i$ has complexity at most $t$.  If graph $G_i = (P,E_i)$ has clique number $\omega_i$, then there is a subset $P'\subset P$ of size $\frac{N^{1/(c_3\log m)}}{\omega_1\cdots \omega_m}$, where $c_3 = c_3(d,t)$ is defined above, such that ${P'\choose 2}\cap E_i = \emptyset$ for all $i$.

\end{lemma}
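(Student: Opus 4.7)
The plan is to combine Lemma \ref{colors} (which produces a large subset on which every $E_i$ is transitive under a common ordering) with Dilworth's theorem (Theorem \ref{dilworth}), applying the latter once for each of the $m$ relations in turn.

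More precisely, I would first invoke Lemma \ref{colors} to obtain a subset $P_0 \subset P$ with $|P_0| \geq N^{1/(c_3 \log m)}$ together with a linear ordering on $P_0$ so that each of the relations $E_1,\ldots,E_m$ is transitive on $P_0$ with respect to this ordering. The clique number of $G_i$ restricted to $P_0$ is still at most $\omega_i$, since subgraphs cannot increase clique number.

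Next, I would peel off the relations one at a time. Apply Theorem \ref{dilworth} to the graph $G_1$ induced on $P_0$ (which is transitively ordered and has clique number at most $\omega_1$) to obtain an independent set $P_1 \subset P_0$ of size at least $|P_0|/\omega_1$. Since $P_1$ inherits the same linear ordering, each of the remaining relations $E_2,\ldots,E_m$ is still transitive on $P_1$, and the clique number of $G_i$ on $P_1$ is still at most $\omega_i$. Repeating this procedure $m$ times, I successively produce $P_1 \supset P_2 \supset \cdots \supset P_m$ with
\[
|P_i| \;\geq\; \frac{|P_{i-1}|}{\omega_i},
\]
and with $P_i$ independent in each of $G_1,\ldots,G_i$. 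Setting $P' = P_m$ yields
\[
|P'| \;\geq\; \frac{N^{1/(c_3\log m)}}{\omega_1\omega_2\cdots \omega_m},
\]
and ${P'\choose 2}\cap E_i = \emptyset$ for every $i \in \{1,\ldots,m\}$, as required.

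There is essentially no obstacle here, since the two ingredients fit together cleanly: Lemma \ref{colors} supplies the transitive ordering that is exactly the hypothesis needed to apply Dilworth's theorem, and transitivity on a set is obviously inherited by subsets, so iterating Dilworth over the $m$ relations does not spoil the structure produced by earlier steps. The only mild bookkeeping point is to verify that the clique numbers do not grow under passing to subsets (immediate) and that the ordering used in Lemma \ref{colors} is the same for all $i$, so a single order supports all $m$ applications of Dilworth's theorem.
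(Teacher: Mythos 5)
Your proposal is correct and is essentially identical to the paper's own proof: the paper also applies Lemma~\ref{colors} to obtain a transitively ordered subset of size $N^{1/(c_3\log m)}$ and then iterates Dilworth's theorem (Theorem~\ref{dilworth}) $m$ times, once per relation, to cut down by the product $\omega_1\cdots\omega_m$. No gaps; the bookkeeping you note (clique number monotone under subsets, one shared ordering for all relations) is exactly what makes the iteration work.
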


\begin{proof}
By applying Lemma \ref{colors}, we obtain a subset $P_1 \subset P$ of size $N^{\frac{1}{c_3\log m}}$, and an ordering on $P_1$ such that $E_i$ is transitive on $P_1$ for all $i$.  Then by an $m$-fold application of Theorem \ref{dilworth}, the statement follows.\end{proof}

\section{Proof of Theorem \ref{main}}\label{proofsection}

Let $P$ be a point set in $\RR^d$ and let $E\subset {P\choose 3}$ be a semi-algebraic relation on $P$.  We say that $(P,E)$ is $K_s^{(3)}$-free if every collection of $s$ points in $P$ contains a triple not in $E$.  Suppose we have $\ell$ disjoint subsets $P_1,\ldots,P_{\ell} \subset P$.  For $1 \leq i_1 < i_2 < i_3 \leq \ell$, we say that the triple $(P_{i_1},P_{i_2},P_{i_3})$ is \emph{homogeneous} if $(p_1,p_2,p_3) \in E$ for all $p_1\in P_{i_1}, p_2 \in P_{i_2},p_3 \in P_{i_3}$, or $(p_1,p_2,p_3) \not\in E$ for all $p_1\in P_{i_1}, p_2 \in P_{i_2},p_3 \in P_{i_3}$.  For $p_1,p_2 \in P_1\cup \cdots \cup P_{\ell}$ and $i \in \{1,\ldots,\ell\}$, we say that the triple $(p_1,p_2,i)$ is \emph{good}, if $(p_1,p_2,p_3) \in E$ for all $p_3 \in P_i$, or $(p_1,p_2,p_3) \not\in E$ for all $p_3 \in P_i$.  We say that the triple $(p_1,p_2,i)$ is \emph{bad} if $(p_1,p_2,i)$ is not good and $p_1,p_2 \not\in P_i$.

\begin{lemma}\label{decomp2}
Let $P$ be a set of $N$ points in $\RR^d$ and let $E\subset {P\choose 3}$ be a (symmetric) semi-algebraic relation on $P$ such that $E$ has complexity at most $t$.  Then for $r = \frac{N^{1/(30d)}}{tc_2}$, where $c_2$ is defined in Lemma \ref{decomp}, there are disjoint subsets $P_1,\ldots,P_{r}\subset P$ such that

\begin{enumerate}

\item $|P_i| \geq \frac{N^{1/(30d)}}{tc_2}$,

\item all triples $(P_{i_1},P_{i_2},P_{i_3})$, $1\leq i_1 < i_2 <i_3\leq r$, are homogeneous, and

\item all triples $(p,q,i)$, where $i\in \{1,\ldots,r\}$ and $p,q \in (P_1\cup \cdots \cup P_r)\setminus P_i$, are good.

\end{enumerate}

\begin{proof}

We can assume that $N > (tc_2)^{30d}$, since otherwise the statement is trivial. Since $E$ is semi-algebraic with complexity $t$, there are polynomials $f_1,\ldots,f_t$ of degree at most $t$, and a Boolean function $\Phi$ such that

$$(p_1,p_2,p_3) \in E\hspace{.5cm}\Leftrightarrow\hspace{.5cm} \Phi(f_1(p_1,p_2,p_3) \geq 0,\ldots,f_t(p_1,p_2,p_3) \geq 0)  = 1.$$

\noindent For each $p,q \in P$ and $i \in \{1,\ldots,t\}$, we define the surface $Z_{p,q,i} = \{x \in \RR^d: f_i(p,q,x) = 0\}$.  Then we set

$$\Sigma = \{Z_{p,q,i} : p,q \in P, 1\leq i\leq t\}.$$

Thus we have $|\Sigma| = N^2t$.  Next we apply Lemma \ref{decomp} to $P$ and $\Sigma$ with parameter $\ell = \sqrt{N}$, and obtain subsets $Q_1,\ldots,Q_{\ell}$ and cells $\Delta_1,\ldots,\Delta_{\ell}$, such that $Q_i\subset \Delta_i$, $|Q_i| = \lfloor\sqrt{N}/4\rfloor$, and each surface in $\Sigma$ crosses at most $c_2N^{1/2 - 1/(4d)}$ cells $\Delta_i$.  We note that $c_2 = c_2(d,t)$ is defined in Lemma \ref{decomp} and $\sqrt{N} \geq \log(tN^2)$.  Set $Q = Q_1\cup \cdots \cup Q_{\ell}$.  Each pair $(p,q) \in {Q\choose 2}$ gives rise to $2t$ surfaces in $\Sigma$.  By Lemma \ref{decomp}, these $2t$ surfaces cross in total at most $2tc_2N^{1/2 - 1/(4d)}$ cells $\Delta_i$.  Hence there are at most $2tc_2N^{5/2 - 1/(4d)}$ bad triples of the form $(p,q,i)$, where $i \in \{1,\ldots,\sqrt{N}\}$ and $p,q \in Q\setminus Q_i$.  Moreover, there are at most $2tc_2N^{2 - 1/(4d)}$ bad triples $(p,q,i)$, where both $p$ and $q$ lie in the same part $Q_j$ and $j \neq i$.

We uniformly at random pick $r = \frac{N^{1/(30d)}}{tc_2}$ subsets (parts) from the collection $\{Q_1,\ldots,Q_{\ell}\}$, and $r$ vertices from each of the subsets that were picked.  For a bad triple $(p,q,i)$ with $p$ and $q$ in distinct subsets, the probability that $(p,q,i)$ survives is at most

$$\left(\frac{r}{\sqrt{N}}\right)^3\left(\frac{r}{\sqrt{N}/4}\right)^2  = \frac{16}{(tc_2)^5}N^{1/(6d) - 5/2}.$$

\noindent For a bad triple $(p,q,i)$ with $p,q$ in the same subset $Q_j$, where $j \neq i$, the probability that the triple $(p,q,i)$ survives is at most

$$\left(\frac{r}{\sqrt{N}}\right)^2\left(\frac{r}{\sqrt{N}/4}\right)^2 = \frac{16}{(tc_2)^4}N^{2/(15d) - 2}.$$

\noindent Therefore, the expected number of bad triples in our random subset is at most

$$\left(\frac{16}{(tc_2)^5}N^{1/(6d) - 5/2}\right)\left(tc_2N^{5/2 - 1/(4d)}\right) + \left(\frac{16}{(tc_2)^4}N^{2/(15d) - 2}\right)\left(tc_2N^{2 - 1/(4d)}\right) < 1.$$

\noindent  Hence we can find disjoint subsets $P_1,\ldots,P_r$, such that $|P_i| \geq r = \frac{N^{1/(30d)}}{tc_2}$, and there are no bad triples $(p,q,i)$, where $i \in \{1,\ldots,r\}$ and $p,q \in (P_1\cup \cdots \cup P_{r})\setminus P_i$.

It remains to show that every triple $(P_{i_1},P_{i_2},P_{i_3})$ is homogeneous for $1\leq i_1 < i_2 < i_3\leq r$.  Let $p_1,\in P_{i_1}, p_2 \in P_{i_2}, p_3 \in P_{i_3}$ and suppose $(p_1,p_2,p_3) \in E$.  Then for any choice $q_1,\in P_{i_1}, q_2 \in P_{i_2}, q_3 \in P_{i_3}$, we also have $(q_1,q_2,q_3) \in E$.  Indeed, since the triple $(p_1,p_2,i_3)$ is good, this implies that $(p_1,p_2,q_3) \in E$.  Since the triple $(p_1,q_3,i_2)$ is also good, we have $(p_1,q_2,q_3) \in E$.  Finally since $(q_2,q_3,i_1)$ is good, we have $(q_1,q_2,q_3) \in E$.   Likewise, if $(p_1,p_2,p_3) \not\in E$, then $(q_1,q_2,q_3) \not\in E$ for any $q_1,\in P_{i_1}, q_2 \in P_{i_2}, q_3 \in P_{i_3}$.\end{proof}

\end{lemma}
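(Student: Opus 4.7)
The plan is to encode $E$ by an arrangement of algebraic surfaces and then prune a decomposition from Lemma~\ref{decomp} by a random sampling that kills the non-homogeneous configurations. I would write $E$ via polynomials $f_1,\ldots,f_t$ of degree at most $t$ and a Boolean function $\Phi$; for each ordered pair $(p,q)\in P\times P$ and each $j\in\{1,\ldots,t\}$, set $Z_{p,q,j}=\{x\in\RR^d: f_j(p,q,x)=0\}$, and let $\Sigma$ be the family of all such surfaces, so $|\Sigma|\le tN^2$. The key feature is that whether $\{p,q,x\}\in E$ depends only on which side of each $Z_{p,q,j}$ the point $x$ lies on; hence on any cell avoided by the $2t$ surfaces attached to $\{p,q\}$, the indicator of $\{p,q,\cdot\}\in E$ is constant.

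Applying Lemma~\ref{decomp} to $P$ and $\Sigma$ with $\ell=\sqrt N$ (valid since $\sqrt N>\log(tN^2)$ for $N$ large) produces disjoint $Q_1,\ldots,Q_{\sqrt N}\subset P$ inside cells $\Delta_1,\ldots,\Delta_{\sqrt N}$ with $|Q_i|\ge\sqrt N/4$, and each surface of $\Sigma$ crosses at most $c_2N^{1/2-1/(4d)}$ cells. A triple $(p,q,i)$ can be bad only if one of the $2t$ surfaces attached to $\{p,q\}$ crosses $\Delta_i$, so a straight double-count gives at most $2tc_2N^{5/2-1/(4d)}$ bad triples with $p,q$ in distinct parts, and at most $2tc_2N^{2-1/(4d)}$ bad triples with $p,q$ in the same part $Q_j$ (and $j\ne i$).

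I would then pick $r=N^{1/(30d)}/(tc_2)$ of the parts uniformly at random, and from each chosen part take $r$ points uniformly at random. A bad triple with $p,q$ in distinct parts survives with probability at most $(r/\sqrt N)^3(4r/\sqrt N)^2=16(tc_2)^{-5}N^{1/(6d)-5/2}$; one with $p,q$ in the same part survives with probability at most $(r/\sqrt N)^2(4r/\sqrt N)^2=16(tc_2)^{-4}N^{2/(15d)-2}$. Multiplying by the bad-triple counts, both contributions to the expected number of surviving bad triples tend to $0$, so some outcome yields disjoint $P_1,\ldots,P_r$ with $|P_i|\ge r$ and no bad triple of the prescribed form, establishing (1) and (3).

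Condition (2) is then immediate from (3) by a three-step swap: given $p_j\in P_{i_j}$ with $\{p_1,p_2,p_3\}\in E$ and arbitrary targets $q_j\in P_{i_j}$, goodness of $(p_1,p_2,i_3)$ lets me replace $p_3$ by $q_3$, then $(p_1,q_3,i_2)$ replaces $p_2$ by $q_2$, then $(q_2,q_3,i_1)$ replaces $p_1$ by $q_1$, each step preserving membership in $E$; the non-edge case is identical. The main obstacle is the exponent bookkeeping: the parameters $\ell=\sqrt N$, the cutting exponent $1/(4d)$, and the sampling size $r=N^{1/(30d)}/(tc_2)$ must be tuned so that both expectation bounds drop strictly below $1$ while the output pieces still have size $\ge r$. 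No fundamentally new ingredient beyond Lemma~\ref{decomp} is needed, but the particular exponent $1/(30d)$ is what ultimately drives the subexponential bound in Theorem~\ref{main}.
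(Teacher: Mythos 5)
Your proposal reproduces the paper's argument essentially verbatim: the same lifting of $E$ to the surface family $\Sigma=\{Z_{p,q,j}\}$, the same application of Lemma~\ref{decomp} with $\ell=\sqrt N$, the same counts of bad triples, the same random sampling of $r$ parts and $r$ points with identical survival-probability bounds, and the same three-step goodness-to-homogeneity swap. The proof is correct and there is no meaningful deviation from the paper's route.
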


We are finally ready to prove Theorem \ref{main}, which follows immediately from the following theorem.

\begin{theorem}
Let $P$ be a set of $N$ points in $\RR^d$ and let $E\subset {P\choose 3}$ be a (symmetric) semi-algebraic relation on $P$ such that $E$ has complexity at most $t$.  If $(P,E)$ is $K^{(3)}_s$-free, then there exists a subset $P'\subset P$ such that ${P'\choose 3}\cap E = \emptyset$ and

$$|P'| \geq 2^{\frac{(\log \log N)^2}{c^s\log\log\log N}},$$

\noindent where $c = c(d,t)$.
\end{theorem}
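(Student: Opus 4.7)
My plan is to prove this by strong induction on $N$. Writing $T(s,N):=2^{(\log\log N)^2/(c^s\log\log\log N)}$, I assume the bound $f(N',s')\geq T(s',N')$ for every $N'<N$ and every $s'\geq 3$; the case $s'=3$ is trivial since $K_3^{(3)}$-freeness forces $E=\emptyset$. First I apply Lemma~\ref{decomp2} to $(P,E)$ and obtain parts $P_1,\ldots,P_r$ with $r\approx N^{1/(30d)}$, each $|P_i|\geq r$, all triples across three distinct parts homogeneous, and all triples $(p,q,i)$ with $p,q\notin P_i$ good. For each ordered pair of distinct indices $(i,j)$, I define the binary relation $F^{(i,j)}$ on $P_i$ by $\{p,q\}\in F^{(i,j)}$ iff the good triple $(p,q,j)$ is ``red'' (equivalently, $(p,q,r)\in E$ for some, hence every, $r\in P_j$). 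Each $F^{(i,j)}$ is a binary semi-algebraic relation on $P_i$ of complexity at most $t$.

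The core case analysis uses a threshold $W:=2^{(\log N)^{1/\sqrt c}}$, calibrated so that $T(s-1,W)=T(s,N)$. In Case A, some $F^{(i,j)}$ contains a clique $C\subset P_i$ of size at least $W$. Fixing any $r_0\in P_j$, every triple through $r_0$ in $C\cup\{r_0\}$ lies in $E$; the $K_s^{(3)}$-freeness of $(P,E)$ forces $(C,E|_C)$ to be $K_{s-1}^{(3)}$-free, and the induction (applied with parameter $s-1$ and size $W<N$) returns an $E$-independent subset of size $T(s-1,W)=T(s,N)$, finishing this case.

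In Case B, every $F^{(i,j)}$ has clique number below $W$. I apply the induction to the representative set $\{p_i^*\}_{i\in[r]}\subset P$, which is $K_s^{(3)}$-free, semi-algebraic, and of size $r<N$; this produces an $E$-independent subset of representatives, equivalently a ``blue'' index set $B\subset[r]$ of parts of size at least $T(s,r)$, which I truncate down to $m\approx(\log N)^{1-1/\sqrt c}/(c_4\log\log N)$ so that the subsequent balance works. For each $i\in B$, I apply Lemma~\ref{apply} to $P_i$ with the $m-1$ binary relations $\{F^{(i,j)}\}_{j\in B\setminus\{i\}}$, each of clique number less than $W$, to obtain $Q_i\subset P_i$ of size at least $|P_i|^{1/(c_3\log m)}/W^{m-1}$ containing no pair in any $F^{(i,j)}$. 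I induct once more within $Q_i$ (same $s$, size $|Q_i|<N$) to extract an $E$-independent subset $R_i\subset Q_i$ of size $T(s,|Q_i|)$. The union $P':=\bigcup_{i\in B}R_i$ is $E$-independent: triples inside a single $R_i$ are not in $E$ by construction; triples across three distinct parts of $B$ are not in $E$ by the blueness of $B$ together with homogeneity; and triples with two points in $R_i$ and one in $R_j$ ($i\neq j$, both in $B$) are not in $E$ since $Q_i$ avoids $F^{(i,j)}$, so $(p,q,P_j)$ is blue.

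The main obstacle is the quantitative balancing in Case B. The choice $\log W=(\log N)^{1/\sqrt c}$ is dictated by Case A, and this forces $m$ from above by roughly $(\log N)^{1-1/\sqrt c}/\log\log N$ so that the $W^{m-1}$ factor in Lemma~\ref{apply} does not dominate. With this cap, $\log|Q_i|\approx\log N/\log\log N$ and $\log\log|Q_i|=\log\log N-\log\log\log N+O(1)$. The ratio $T(s,|Q_i|)/T(s,N)$ then differs from $1$ by a factor of $2^{-\Theta(\log\log N)/c^s}$, while the multiplicative blue-clique factor $m$ contributes $2^{(1-1/\sqrt c)\log\log N}$, so their product exceeds $T(s,N)$ once $c=c(d,t)$ is taken large enough. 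The two main hazards are (i) that the recursive blue-clique size $T(s,r)$ is actually at least the target $m$, which one verifies for large $N$ using the form of $T$, and (ii) that the same value of $c$ governs both the Case A calibration $W=2^{(\log N)^{1/\sqrt c}}$ and the Case B balance---these tie together by a single choice of sufficiently large $c$.
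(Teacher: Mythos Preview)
Your proposal is correct and follows essentially the same route as the paper's proof: apply Lemma~\ref{decomp2}, induct on the set of representatives to extract a ``blue'' index set, define the binary semi-algebraic relations $F^{(i,j)}$ on each part, split into cases according to whether some $F^{(i,j)}$ has a large clique (dropping to $s-1$) or not (invoking Lemma~\ref{apply} and recursing inside each part), and take the union. The only differences are cosmetic: the paper performs the clique-threshold case split \emph{after} restricting to the blue index set rather than before, and it uses the fixed parameters $W=2^{(\log N)^{1/4}}$ and $m=\sqrt{\log N}$ in place of your calibrated $W=2^{(\log N)^{1/\sqrt c}}$ and $m\approx(\log N)^{1-1/\sqrt c}/\log\log N$; both choices close the induction for sufficiently large $c=c(d,t)$.
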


\begin{proof}

The proof is by induction on $N$ and $s$.  The base cases are $s = 3$ or $N \leq (100tc_2)^{30d}$, where $c_2$ is defined in Lemma \ref{decomp}.  When $N \leq (100tc_2)^{30d}$, the statement holds trivially for sufficiently large $c = c(d,t)$.  If $s = 3$, then again the statement follows immediately by taking $P' = P$.

Now assume that the statement holds if $s ' \leq s, N' \leq N$ and not both inequalities are equalities.  We apply Lemma \ref{decomp2} to $(P,E)$ and obtain disjoint subsets $P_1,\ldots,P_r$, where $r  = \frac{N^{1/(30d)}}{tc_2}$, such that $|P_i| \geq \frac{N^{1/(30d)}}{tc_2}$, every triple of parts $(P_{i_1},P_{i_2},P_{i_3})$ is homogeneous, and every triple $(p,q,i)$ is good where $i \in \{1,\ldots,r\}$ and $p,q \in (P_1\cup\cdots\cup P_r) \setminus P_i$.

Let $P_0$ be the set of $\frac{N^{1/(30d)}}{tc_2}$ points obtained by selecting one point from each $P_i$.  Since $(P_0,E)$ is $K^{(3)}_s$-free, we can apply the induction hypothesis on $P_0$, and find a set of indices $I = \{i_1,\ldots,i_m\}$ such that

$$\log |I| \geq  \frac{\left(\log\log \frac{N^{1/(30d)}}{tc_2}\right)^2}{{c^s\log\log \log \frac{N^{1/(30d)}}{tc_2}} }  \geq (1/2)\log \log N,$$

\noindent and for every triple $i_1 < i_2 < i_3$ in $I$ all triples with one point in each $P_{i_j}$ do not satisfy $E$.  Hence we may assume $m = \sqrt{\log N}$, and let $Q_j = P_{i_j}$ for $1\leq j \leq m$.

For each subset $Q_i$, we define binary semi-algebraic relations $E_{i,j}\subset {Q_i\choose 2}$, where $j \neq i  $, as follows. Since $E\subset {P\choose 3}$ is semi-algebraic with complexity $t$, there are $t$ polynomials $f_1,\ldots,f_t$ of degree at most $t$, and a Boolean function $\Phi$ such that $(p_1,p_2,p_3) \in E$ if and only if

$$\Phi(f_1(p_1,p_2,p_3) \geq 0,\ldots,f_t(p_1,p_2,p_3) \geq 0)  = 1.$$

\noindent Fix a point $q_0 \in Q_j$, where $j \neq i$.  Then for $p_1,p_2 \in Q_i$, we have $(p_1,p_2) \in E_{i,j}$ if and only if

$$\Phi(f_1(p_1,p_2,q_0) \geq 0,\ldots,f_t(p_1,p_2,q_0) \geq 0)  = 1.$$

 Suppose there are $2^{(\log N)^{1/4}}$ vertices in $Q_i$ that induce a clique in the graph $G_{i,j} = (Q_i,E_{i,j})$.  Then these vertices would induce a $K_{s-1}^{(3)}$-free subset in the original (hypergraph) $(P,E)$.  By the induction hypothesis, we can find a subset $Q_i' \subset Q_i$ such that

  $$|Q'_i| \geq 2^{\frac{((1/4)\log\log N)^2}{c^{s-1}\log\log\log N}} \geq 2^{\frac{(\log \log N)^2}{c^s\log\log\log N}},$$

  \noindent for sufficiently large $c$, such that ${Q'_i\choose 3}\cap E = \emptyset$ and we are done.  Hence we can assume that each graph $G_{i,j} = (Q_i,E_{i,j})$ has clique number at most $2^{(\log n)^{1/4}}$.  By applying Lemma \ref{apply} to each $Q_i$, where $Q_i$ is equipped with $m-1$ semi-algebraic relations  $E_{i,j}$, $j \neq i$, we can find subsets $T_i\subset Q_i$ such that

   $$|T_i| \geq \frac{|Q_i|^{1/(c_3\log m)}}{2^{(\log N)^{1/4}\sqrt{\log N}}} = \frac{2^{\frac{\log N}{30dc_3\log(\sqrt{\log N})}}}{(tc_2)^{1/c_3\log m}2^{(\log N)^{3/4}}} \geq 2^{\frac{\log N}{c_5\log\log N}},$$

   \noindent where $c_5  = c_5(d,t)$, and ${T_i\choose 2} \cap E_j = \emptyset$ for all $j \neq i$.   Therefore, we now have subsets $T_1,\ldots,T_m$, such that

   \begin{enumerate}

   \item $m = \sqrt{\log N}$,

   \item for any triple $(T_{i_1},T_{i_2},T_{i_3})$, $1 \leq i_1 < i_2 < i_3\leq m$, every triple with one vertex in each $T_{i_j}$ is not in $E$,

   \item for any pair $(T_{i_1},T_{i_2})$, $1 \leq i_1 < i_2 \leq m$, every triple with two vertices in $T_{i_1}$ and one vertex in $T_{i_2}$ is not in $E$, and every triple with two vertices in $T_{i_2}$ and one vertex in $T_{i_1}$ is also not in $E$.

   \end{enumerate}

\noindent By applying the induction hypothesis to each $(T_i,E)$,  we obtain a collection of subsets $U_i \subset T_i$ such that

$$\log |U_i| \geq  \frac{\left(\log \left(\frac{\log N}{c_5\log\log N}\right)\right)^2}{c^s\log\log\left(\frac{\log N}{c_5\log\log N} \right)} \geq  \frac{(\log\log N - \log(c_5\log\log N))^2}{c^s\log\log\log N},$$

\noindent and ${U_i\choose 3}\cap E = \emptyset$.  Let $P' = \bigcup\limits_{i = 1}^m U_i$.  Then by above we have ${P'\choose 3}\cap E = \emptyset$ and

\begin{eqnarray*}
\log |P'| & \geq & \frac{(\log\log N - \log(c_5\log\log N))^2}{c^s\log\log\log N} +  \frac{1}{2}\log\log N \\\\
 & \geq & \frac{(\log\log N)^2  - 2 (\log\log N)\log(c_5\log\log N) + (\log(c_5\log\log N))^2}{c^s\log\log\log N} + \frac{1}{2}\log\log N \\\\
  & \geq & \frac{(\log\log N)^2}{c^s\log\log \log N},
 \end{eqnarray*}

\noindent for sufficiently large $c = c(d,t)$.\end{proof}

\section{Application: One-sided hyperplanes}\label{secosh}

Let us consider a finite set $H$ of hyperplanes in $\RR^d$ in general position, that is, every $d$ members in $H$ intersect at a distinct point.  Let $OSH_d(s,n)$ denote the smallest integer $N$ such that every set $H$ of $N$ hyperplanes in $\mathbb{R}^d$ in general position contains $s$ members $H_1$ such that the vertex set of the arrangement of $H_1$ lies above the $x_d = 0$ hyperplane, or contains $n$ members $H_2$ such that the vertex set of the arrangement of $H_2$ lies below the $x_d = 0$ hyperplane.

In 1992, Matou\v{s}ek and Welzl \cite{welzl} observed that $OSH_2(s,n) = (s-1)(n-1) + 1$. Dujmovi\'c and Langerman \cite{duj} used the existence of $OSH_d(n,n)$ to prove a ham-sandwich cut theorem for hyperplanes.  Again by adapting the Erd\H os-Rado argument, Conlon et al.~\cite{suk} showed that for $d\geq 3$,

\begin{equation}\label{oldosh}
OSH_d(s,n) \leq \twr_{d-1}(c_6sn\log n),
\end{equation}

\noindent where $c_6$ is a constant that depends only on $d$.  See Eli\'a\v{s} and Matou\v{s}ek \cite{matousek} for more related results, including lower bound constructions.

Since each hyperplane $h_i\in H$ is specified by the linear equation

$$a_{i,1}x_1 + \cdots + a_{i,d}x_d = b_i,$$

\noindent we can represent $h_i\in H$ by the point $h^{\ast}_i \in\mathbb{R}^{d + 1}$ where $h^{\ast}_i = (a_{i,1},\ldots,a_{i,d},b_i)$ and let $P = \{h^{\ast}_i:h_i \in H\}$.  Then we define a relation $E\subset {P\choose d}$ such that $(h^{\ast}_{i_1},\ldots,h^{\ast}_{i_d}) \in E$ if and only if $h_{i_1}\cap\cdots\cap h_{i_d}$ lies above the hyperplane $x_d = 0$ (i.e. the $d$-th coordinate of the intersection point is positive).  Clearly, $E$ is a semi-algebraic relation with complexity at most $t = t(d)$.  Therefore, as an application of Theorem \ref{main} and Corollary \ref{maincor}, we make the following improvement on (\ref{oldosh}).

\begin{theorem}

For fixed $s \geq 4$, we have $OSH_3(s,n) \leq 2^{n^{o(1)}}$.  For fixed $d\geq 4$ and $s\geq d+1$, we have

$$OSH_d(s,n) \leq \twr_{d-1}(n^{o(1)}).$$

\end{theorem}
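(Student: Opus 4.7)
The plan is to recast the problem as a Ramsey question for a $d$-ary semi-algebraic relation and then invoke Theorem \ref{main} and Corollary \ref{maincor} directly. Given a collection $H = \{h_1,\ldots,h_N\}$ of hyperplanes in general position in $\RR^d$, with $h_i$ given by $\sum_{j=1}^d a_{i,j} x_j = b_i$, I would encode each hyperplane $h_i$ as the point $h_i^* = (a_{i,1},\ldots,a_{i,d},b_i) \in \RR^{d+1}$, set $P = \{h_i^* : 1 \leq i \leq N\}$, and define $E \subset \binom{P}{d}$ by declaring $(h_{i_1}^*,\ldots,h_{i_d}^*) \in E$ iff the intersection point $h_{i_1} \cap \cdots \cap h_{i_d}$ has positive $d$-th coordinate. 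By construction, a clique in $E$ corresponds to a subcollection of $H$ whose arrangement vertices all lie above $\{x_d = 0\}$, while an independent set corresponds to one whose vertices all lie below; thus $OSH_d(s,n) \leq R_d^{d+1,t}(s,n)$ once $E$ is shown to have complexity bounded by some $t = t(d)$.

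The one step with any content is verifying this complexity bound, which I would do via Cramer's rule. The $d$-th coordinate of the unique intersection point of $h_{i_1},\ldots,h_{i_d}$ equals $\det(M')/\det(M)$, where $M$ is the $d \times d$ coefficient matrix whose $j$-th row is $(a_{i_j,1},\ldots,a_{i_j,d})$ and $M'$ is obtained from $M$ by replacing the last column with the constants $(b_{i_1},\ldots,b_{i_d})^{\top}$. General position forces $\det(M) \neq 0$, and the sign condition that this coordinate is positive is equivalent to $\det(M) \cdot \det(M') > 0$, a single polynomial inequality of degree $2d$ in the $d(d+1)$ coordinates of the $d$ input points. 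Hence $E$ is a $d$-ary semi-algebraic relation on $P \subset \RR^{d+1}$ whose description complexity is bounded by a constant $t(d)$.

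With the semi-algebraic setup in place, the two bounds fall out of the results already established. For $d = 3$, the relation $E$ is $3$-ary on a point set in $\RR^4$, so Theorem \ref{main} yields $OSH_3(s,n) \leq R_3^{4,t(3)}(s,n) \leq 2^{n^{o(1)}}$ for fixed $s \geq 4$. For $d \geq 4$ and $s \geq d+1$, Corollary \ref{maincor} applied with uniformity $k = d$ and ambient dimension $d+1$ gives $OSH_d(s,n) \leq R_d^{d+1,t(d)}(s,n) \leq \twr_{d-1}(n^{o(1)})$. There is no genuine obstacle in this argument; the only mildly delicate point is in the Cramer's rule step, where one must clear the denominator $\det(M)$ correctly by passing from the rational inequality $\det(M')/\det(M) > 0$ to the polynomial inequality $\det(M) \cdot \det(M') > 0$, which is permissible exactly because general position rules out the degenerate case $\det(M) = 0$.
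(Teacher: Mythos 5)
Your proposal is correct and follows the same route as the paper: encode each hyperplane as a point in $\RR^{d+1}$, define the $d$-ary semi-algebraic relation by the sign of the $d$-th coordinate of the intersection point, and then invoke Theorem \ref{main} (for $d=3$) and Corollary \ref{maincor} (for $d\geq 4$). The only difference is that you explicitly verify the bounded complexity of $E$ via Cramer's rule and the clearing of the denominator $\det(M)$, a step the paper simply asserts as clear.
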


\section{Monochromatic triangles}\label{triangles}

In this section, we will prove Theorem \ref{color}.

\begin{proof}[Proof of Theorem \ref{color}]  We proceed by induction on $m$.  The base case when $m = 1$ is trivial.  Now assume that the statement holds for $m' < m$.  Set $N = 2^{cm\log\log m}$, where $c = c(d,t)$ will be determined later, and let $E_1,\ldots,E_m\subset {P\choose 2}$ be semi-algebraic relations on $P$ such that ${P\choose 2} = E_1\cup \cdots \cup E_m$, and each $E_i$ has complexity at most $t$.  For the sake of contradiction, suppose $P$ does not contain three points such that every pair of them is in $E_i$ for some fixed $i$.

For each relation $E_i$, there are $t$ polynomials $f_{i,1},\ldots,f_{i,t}$ of degree at most $t$, and a Boolean function $\Phi_i$ such that

$$(p,q) \in E_i \hspace{.5cm}\Leftrightarrow\hspace{.5cm} \Phi_i(f_{i,1}(p,q) \geq 0,\ldots,f_{i,t}(p,q)\geq 0) = 1.$$

For $ 1 \leq i \leq m,  1\leq j \leq t, p \in P$, we define the surface $Z_{i,j,p} = \{ x \in \RR^d: f_{i,j}(p,x)  = 0\}$, and let $$\Sigma = \{ Z_{i,j,p} : 1 \leq i \leq m,  1\leq j \leq t, p \in P\}.$$

\noindent Hence $|\Sigma| = mtN$.  We apply Lemma \ref{cut2} to $\Sigma$ with parameter $r = 2tm$, and decompose $\RR^d$ into $c_1(2tm)^{2d}$ regions $\Delta_i$, where $c_1 = c_1(t,d)$ is defined in Lemma \ref{cut2}, such that each region $\Delta_i$ is crossed by at most $tmN/r = N/2$ members in $\Sigma$.  By the pigeonhole principle, there is a region $\Delta\subset \RR^d$, such that $|\Delta\cap P| \geq \frac{N}{c_1(2tm)^{2d}}$, and at most $N/2$ members in $\Sigma$ crosses $\Delta$.  Let $P_1$ be a set of exactly $\left\lfloor \frac{N}{c_1(2tm)^{2d}}\right\rfloor$ points in $P\cap \Delta$, and let $P_2$ be the set of points in $P\setminus P_1$ that do not give rise to a surface that crosses $\Delta$.  Hence

$$|P_2| \geq N - \frac{N}{c_1(2tm)^{2d}} - \frac{N}{2} \geq \frac{N}{4}.$$

Therefore, each point $p \in P_2$ has the property that $p\times P_1 \subset E_i$ for some fixed $i$.  We define the function $\chi:P_2\rightarrow \{1,\ldots,m\}$, such that $\chi(p) = i$ if and only if $p\times P_1 \subset E_i$.  Set $I = \{\chi(p): p \in P_2\}$ and $m_0 = |I|$, that is, $m_0$ is the number of distinct relations (colors) between the sets $P_1$ and $P_2$.  Now the proof falls into 2 cases.

\medskip

\noindent \emph{Case 1.}  Suppose $m_0 > \log m$.  By the assumption, every pair of points in $P_1$ is in $E_i$ where $i \in \{1,\ldots,m\}\setminus I$.  By the induction hypothesis, we have

$$\frac{2^{cm\log\log m}}{c_1(2tm)^{2d}} \leq |P_1|\leq 2^{c(m - m_0)\log\log m}.$$

\noindent Hence

$$cm_0\log\log m \leq \log(c_1(2tm)^{2d}) \leq 2d\log (c_12tm),$$

\noindent which implies

$$m_0 \leq \frac{2d\log(c_12tm)}{c\log\log m},$$

\noindent and we have a contradiction for sufficiently large $c = c(d,t)$.

\medskip

\noindent \emph{Case 2.}  Suppose $m_0 \leq \log m$.  By the pigeonhole principle, there is a subset $P_3\subset P_2$, such that $|P_3| \geq \frac{N}{4m_0}$ and $P_1\times P_3\subset E_i$ for some fixed $i$. Hence every pair of points $p,q \in P_3$ satisfies $(p,q) \not\in E_i$, for some fixed $i$.  By the induction hypothesis, we have

$$\frac{2^{cm\log\log m}}{4m_0} \leq |P_3| \leq 2^{c(m-1)\log\log m}.$$

\noindent Therefore

$$c\log\log m \leq \log(4m_0) \leq \log(4\log(m)),$$

\noindent which is a contradiction since $c$ is sufficiently large.  This completes the proof of Theorem \ref{color}.\end{proof}

 We note that in \cite{fps15}, Fox, Pach, and Suk extended the arguments above to show that $R_2^{d,t}(s;m) \leq 2^{O(sm\log\log m)}$.

\subsection{Lower bound construction and Schur numbers}\label{schursec}

Before we prove Theorem \ref{lowermulti}, let us recall a classic theorem of Schur \cite{schur} which is considered to be one of the earliest applications of Ramsey Theory.  A set $P\subset \RR$ is said to be \emph{sum-free} if for any two (not necessarily distinct) elements $x,y \in P$, their sum $x+y$ is not in $P$.  The Schur number $S(m)$ is defined to be the maximum integer $N$ for which the integers $\{1,\ldots,N\}$ can be partitioned into $m$ sum-free sets.

Given a partition $\{1,\ldots,N\}  =P_1\cup\cdots\cup P_m$ into $m$ parts such that $P_i$ is sum-free, we can define an $m$-coloring on the edges of a complete $(N+1)$-vertex graph which does not contain a monochromatic triangle as follows.  Let $V = \{1,\ldots,N+1\}$ be the vertex set, and we define the coloring $\chi:{V\choose 2} \rightarrow m$ by $\chi(x,y) = i$ iff $|x-y| \in P_{i}$.  Now suppose for the sake of contradiction that there are vertices $x,y,z$ that induce a monochromatic triangle, say with color $i$, such that $x < y < z$.  Then we have $y-x, z-y, z-x \in P_i$ and $(y -x) + (z - y) = (z - x)$, which is a contradiction since $P_i$ is sum free.  Therefore $S(m) < R_2(3;m)$.

Since Schur's original 1916 paper, the lower bound on $S(m)$ has been improved by several authors \cite{ab1,ab2,ex}, and the current record of $S(m)  \geq \Omega(3.19^m)$ is due to Fredricksen and Sweet \cite{fred}.  Their lower bound follows by computing $S(6)\geq 538$, and using the recursive formula

$$S(m)\geq c_{\ell}(2S(\ell) + 1)^{m/\ell},$$

 \noindent which was established by Abbott and Hanson \cite{ab2}.  Fredricksen and Sweet also computed $S(7) \geq 1680$, which we will use to prove Theorem \ref{lowermulti}.

\begin{lemma}\label{construct2}
For each integer $\ell \geq 1$, there is a set $P_{\ell}$ of $(1681)^{\ell}$ points in $\RR$ equipped with semi-algebraic relations $E_{1},\ldots,E_{7\ell} \subset {P_{\ell}\choose 2}$, such that

\begin{enumerate}

\item $E_{1}\cup \cdots \cup E_{7\ell} = {P_{\ell}\choose 2}$,

\item $E_{i}$ has complexity at most 5000,

\item $E_{i}$ is translation invariant, that is, $(x,y) \in E_i$ iff $(x + C,y + C) \in E_i$, and

\item the graph $G_{\ell,i} = (P_{\ell},E_{i})$ is triangle free for all $i$.

\end{enumerate}

\end{lemma}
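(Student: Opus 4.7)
\emph{Proof plan.} The plan is to bootstrap the sum-free 7-partition $A_1 \sqcup \cdots \sqcup A_7 = \{1, \ldots, 1680\}$ given by $S(7) \geq 1680$ into a multi-scale construction on the real line. Choose separations $D_1 = 1$ and $D_k > 2R_{k-1}$ for $k \geq 2$ (for instance $D_k = 10R_{k-1}$), where $R_{k-1}$ denotes the diameter of the truncated set
$$\Bigl\{ \sum_{i=1}^{k-1} c_i D_i : c_i \in \{0, 1, \ldots, 1680\}\Bigr\},$$
and set
$$P_\ell = \Bigl\{ \sum_{k=1}^\ell c_k D_k : c_k \in \{0, 1, \ldots, 1680\}\Bigr\}.$$
Then $|P_\ell| = 1681^\ell$, and each $x \in P_\ell$ has a unique ``digit expansion'' $(c_1(x), \ldots, c_\ell(x))$.

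Next I would color pairs by the shell containing their difference. For $x \neq y$ in $P_\ell$, let $k(x,y)$ be the largest index with $c_{k(x,y)}(x) \neq c_{k(x,y)}(y)$ and $a(x,y) = |c_{k(x,y)}(x) - c_{k(x,y)}(y)|$. The separation $D_k > 2R_{k-1}$ guarantees that the shells $[aD_k - R_{k-1}, aD_k + R_{k-1}]$ are pairwise disjoint across all $(k, a) \in \{1, \ldots, \ell\} \times \{1, \ldots, 1680\}$ and that $|x-y|$ falls into exactly the shell indexed by $(k(x,y), a(x,y))$. I then declare
$$(x, y) \in E_{7(k-1)+j} \iff (x-y)^2 \in \bigcup_{a \in A_j}\bigl[(aD_k - R_{k-1})^2, (aD_k + R_{k-1})^2\bigr].$$
This depends only on $x - y$ (giving translation invariance), uses at most $2|A_j| \leq 3360$ polynomial inequalities of degree $2$ (complexity well below $5000$, independent of $\ell$), and the shells together cover every realized difference, yielding both the covering property and disjointness of colors.

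For triangle-freeness, the key step is transferring sum-freeness from $A_j$ to the coloring. Suppose $x, y, z$ form a monochromatic triangle in color $7(k-1)+j$. Then every pair first differs at level $k$, so $x, y, z$ share all digits above level $k$ and have three distinct level-$k$ digits. Ordering them as $c_k(x) < c_k(y) < c_k(z)$, the three positive differences $c_k(y) - c_k(x)$, $c_k(z) - c_k(y)$, $c_k(z) - c_k(x)$ all lie in $A_j$, and since the third equals the sum of the first two, this contradicts sum-freeness of $A_j$.

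The main anticipated obstacle is keeping the complexity uniformly bounded in $\ell$: naively, a coloring using $\ell$ geometrically-growing scales could seem to require $\Omega(\ell)$ polynomials per relation in order to isolate the correct scale. The resolution is that each $E_i$ is pinned to a single scale $D_k$ and a single Schur set $A_j$, so only $|A_j|$-many degree-$2$ interval tests are needed, while the geometric separation $D_{k+1} \gg R_k$ already does the work of ruling out pairs from other scales without any further polynomial condition.
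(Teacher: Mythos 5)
Your proposal is correct and takes essentially the same approach as the paper: both constructions are scale-separated blow-ups of the $S(7)\geq 1680$ sum-free partition $A_1\cup\cdots\cup A_7$ of Fredricksen and Sweet, with the $j$-th relation at scale $k$ given by a translation-invariant, bounded-complexity union of intervals around $a D_k$ for $a\in A_j$, and triangle-freeness descending directly from sum-freeness of $A_j$ via the ``highest differing digit'' argument. The paper phrases the blow-up recursively as $1681$ translated copies $Q_i = P_{\ell-1}+iC$ and tests $\exists\, z\in A_j : \bigl| |x-y|/C - z\bigr| < 1/1000$; your direct digit-expansion presentation with shells $[(aD_k-R_{k-1})^2,(aD_k+R_{k-1})^2]$ in $(x-y)^2$ is an equivalent formulation of the same construction.
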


\begin{proof}
We start be setting $P_{1} =\{1,2,\ldots,1681\}$.  By \cite{fred}, there is a partition on $\{1,\ldots,1680\} = A_1\cup \cdots \cup A_7$ into seven parts, such that each $A_i$ is sum-free.  For $i \in \{1,\ldots,7\}$, we define the binary relation $E_{i}$ on $P_1$ by

$$(x,y) \in E_{i}\hspace{.5cm}\Leftrightarrow\hspace{.5cm}(1 \leq |x- y| \leq 1680) \wedge(|x-y| \in A_i).$$

\noindent Since $|A_i| \leq 1680$, $E_{i}$ has complexity at most 5000.  By the arguments above, the graph $G_{1,i} = (P_{1},E_{i})$ is triangle free for all $i \in \{1,\ldots,7\}$. In what follows, we blow-up this construction so that the statement holds.

Having defined $P_{\ell - 1}$ and $E_{1},....,E_{7\ell-7}$, we define $P_{\ell}$ and $E_{\ell -6},\ldots,E_{\ell }$ as follows.  Let $C  = C(\ell)$ be a very large constant, say $C > \left(5000 \cdot \max\{P_{\ell - 1}\}\right)^2$.  We construct 1681 translated copies of $P_{\ell -1}$, $Q_i = P_{\ell - 1} + iC$ for $1\leq i \leq 1681$, and set $P_{\ell} = Q_1\cup \cdots \cup Q_{1681}$.  For $1\leq j \leq 7$, we define the relation $E_{\ell - 7 + j}$ by

$$(x,y) \in E_{\ell -7 + j}\hspace{.5cm}\Leftrightarrow\hspace{.5cm}(C/2 \leq |x-y| \leq 1682C) \wedge(\exists z \in A_j: ||x - y|/C  - z| < 1/1000).$$

Clearly $E_1,\ldots,E_{7\ell}$ satisfy properties (1), (2), and (3).  The fact that $G_{\ell,i} = (P_{\ell},E_i)$ is triangle follows from the same argument as above.\end{proof}

Theorem \ref{lowermulti} immediately follows from Lemma \ref{construct2}.

\section{Concluding remarks}

\textit{1.}  We showed that given an $N$-element point set $P$ in $\RR^d$ equipped with a semi-algebraic relation $E\subset{P\choose 3}$, such that $E$ has complexity at most $t$ and $(P,E)$ is $K^{(3)}_s$-free, then there is a subset $P'\subset P$ such that $|P'| \geq 2^{(\log\log N)^2/(c^s \log\log\log N)}$ and ${P'\choose 3} \cap E = \emptyset$.  In \cite{suk}, Conlon et al.~conjectured that one can find a much larger ``independent set".  More precisely, they conjectured that there is a constant $\epsilon = \epsilon(d,t,s)$ such that $|P'| \geq N^{\epsilon}$.  Perhaps an easier task would be to find a large subset $P'$ such that $E$ is \emph{transitive} on $P'$, that is, there is an ordering on $P' = \{p_1,\ldots,p_m\}$ such that for $1\leq i_1 < i_2 < i_3 < i_4 \leq m$, $(p_{i_1},p_{i_2},p_{i_3}), (p_{i_2},p_{i_3},p_{i_4}) \in E$ implies that $(p_{i_1},p_{i_2},p_{i_4}),(p_{i_1},p_{i_3},p_{i_4}) \in E$.

\bigskip

\noindent\textit{2. Off diagonal Ramsey numbers for binary semi-algebraic relations.}  As mentioned in the introduction, we have $R_2(s,n) \leq O(n^{s-1})$.  It would be interesting to see if one could improve this upper bound in the semi-algebraic setting.  That is, for fixed integers $t\geq 1$ and $d\geq s \geq 3$, is there an constant $\epsilon = \epsilon(d,t,s)$ such that $R^{d,t}_2(s,n) \leq O(n^{s-1- \epsilon})$?  For $d < s$, it is likely that such an improvement can be made using Lemma \ref{cut2}.

\bigskip

\noindent \textit{3. Low complexity version of Schur's Theorem.}  We say that the subset $P \subset \{1,\ldots,N\}$ has complexity $t$ if there are $t$ intervals $I_1,\ldots,I_t$ such that $P = \{1,\ldots,N\}\cap (I_1\cup \cdots \cup I_t)$.  Let $S_t(m)$ be the maximum integer $N$ for which the integers $\{1,\ldots,N\}$ can be partitioned into $m$ sum-free parts, such that each part has complexity at most $t$.  By following the proof of Theorem \ref{color}, one can show that $S_t(m) \leq 2^{m\log\log 2t}$.

\end{document}